\documentclass[11pt,a4paper]{amsart}


\usepackage{amsmath, amssymb, amsthm}
\usepackage[english]{babel}
\usepackage[all]{xy}
\usepackage{graphicx}
\usepackage{tikz}
\usetikzlibrary{matrix}
\usepackage{verbatim}
\usepackage{enumerate}
\usepackage{url}
\usepackage{bm}


\newtheorem{thm}{Theorem}[section]
\newtheorem{lem}[thm]{Lemma}

\theoremstyle{definition}
\newtheorem{defn}[thm]{Definition}
\theoremstyle{remark}
\newtheorem{rem}[thm]{Remark}

\newcommand{\imagescaling}{0.9}
\newcommand{\note}[1]{}

\usepackage{xcolor}

\newcommand{\maxdim}{N}
\newcommand{\maxpop}{M}

\DeclareMathOperator{\Fix}{Fix}

\DeclareMathOperator{\grad}{grad}

\newcommand{\Splay}{\Dp}
\newcommand{\Sync}{\Sp}

\newcommand{\vth}{\vartheta}

\newcommand{\itw}[1]{#1_2}
\newcommand{\ifo}[1]{#1_4}
\newcommand{\iel}[1]{#1_\ell}
\newcommand{\mdm}[1]{\hat{#1}}
\newcommand{\cpl}{g}

\newcommand{\aaa}{\alpha}
\newcommand{\at}{\itw{\aaa}}
\newcommand{\af}{\ifo{\aaa}}

\newcommand{\gt}{\itw{\cpl}}
\newcommand{\gtm}{\itw{\mdm{\cpl}}}
\newcommand{\gf}{\ifo{\cpl}}
\newcommand{\gfm}{\ifo{\mdm{\cpl}}}
\newcommand{\gl}{\iel{\cpl}}
\newcommand{\glm}{\iel{\mdm{\cpl}}}

\newcommand{\ud}{\mathrm{d}}

\DeclareMathOperator{\Rep}{Re}

\renewcommand{\Re}{\Rep}

\newcommand{\sm}{\smallsetminus}

\newcommand{\R}{\mathbb{R}}

\newcommand{\Z}{\mathbb{Z}}

\newcommand{\Zm}{\Z_\maxpop}

\newcommand{\Tor}{\mathbf{T}}
\newcommand{\Torn}{\Tor^\maxdim}

\newcommand{\Tormn}{\Tor^{\maxpop\maxdim}}

\newcommand{\g}{\gamma}
\newcommand{\G}{\Gamma}

\newcommand{\Ss}{\mathbf{S}}
\newcommand{\Sk}[1]{\Ss_{#1}}
\newcommand{\Sn}{\Ss_\maxdim}

\newcommand{\abs}[1]{\left|#1\right|}

\newcommand{\norm}[1]{\left\|#1\right\|}

\newcommand{\rset}[2]{\left\lbrace\, #1\,\left|\;#2\right.\right\rbrace}
\newcommand{\lset}[2]{\left\lbrace\left. #1\;\right|\,#2\,\right\rbrace}
\newcommand{\set}[2]{\rset{#1}{#2}}
\newcommand{\tset}[2]{\big\lbrace #1\,\big|\;#2\big\rbrace}
\newcommand{\sset}[1]{\left\lbrace #1\right\rbrace}

\DeclareMathOperator{\id}{id}

\newcommand{\Sp}{\mathrm{S}}
\newcommand{\Dp}{\mathrm{D}}
\newcommand{\DDD}{{\Dp\Dp\Dp}}
\newcommand{\SSS}{{\Sp\Sp\Sp}}
\newcommand{\DSS}{{\Dp\Sp\Sp}}
\newcommand{\SDS}{{\Sp\Dp\Sp}}
\newcommand{\SSD}{{\Sp\Sp\Dp}}
\newcommand{\DDS}{{\Dp\Dp\Sp}}
\newcommand{\SDD}{{\Sp\Dp\Dp}}
\newcommand{\DSD}{{\Dp\Sp\Dp}}

\newcommand{\DpS}{{\Dp\psi\Sp}}
\newcommand{\pDS}{{\psi\Dp\Sp}}
\newcommand{\SDp}{{\Sp\Dp\psi}}
\newcommand{\SpD}{{\Sp\psi\Dp}}
\newcommand{\pSD}{{\psi\Sp\Dp}}
\newcommand{\DSp}{{\Dp\Sp\psi}}

\newcommand{\pSS}{{\psi\Sp\Sp}}
\newcommand{\SpS}{{\Sp\psi\Sp}}
\newcommand{\SSp}{{\Sp\Sp\psi}}

\newcommand{\pDD}{{\psi\Dp\Dp}}
\newcommand{\DpD}{{\Dp\psi\Dp}}
\newcommand{\DDp}{{\Dp\psi\Dp}}

\newcommand{\Spp}{{\Sp\psi_2\psi_3}}
\newcommand{\pDp}{{\psi_1\Dp\psi_3}}

\newcommand{\K}{K}

\newcommand{\Cyc}{\textsf{\textbf{C}}}

\newcommand{\Wu}{W^\textnormal{u}}
\newcommand{\Ws}{W^\textnormal{s}}

\usepackage{etoolbox}%
\apptocmd{\thebibliography}{\footnotesize}{}{}%

\begin{document}



\title[Heteroclinic Cycles of Localized Frequency Synchrony]{Heteroclinic Dynamics of Localized Frequency Synchrony: Heteroclinic Cycles for Small Populations}
\author{Christian Bick}%

\address{Centre for Systems Dynamics and Control and Department of Mathematics, University of Exeter, EX4~4QF, UK}
\date{\today}

\begin{abstract}
Many real-world systems can be modeled as networks of interacting oscillatory units.  Collective dynamics that are of functional relevance for the oscillator network, such as switching between metastable states, arise through the interplay of network structure and interaction. Here, we give results for small networks on the existence of heteroclinic cycles between dynamically invariant sets on which the oscillators show localized frequency synchrony. Trajectories near these heteroclinic cycles will exhibit sequential switching of localized frequency synchrony: a population oscillators in the network will oscillate faster (or slower) than others and which population has this property changes sequentially over time. Since we give explicit conditions on the system parameters for such dynamics to arise, our results give insights into how network structure and interactions (which include higher-order interactions between oscillators) facilitate heteroclinic switching between localized frequency synchrony. 
\end{abstract}

\maketitle

\section{Introduction}

Networks of interacting oscillatory units can give rise to dynamics where the system appears to be in one metastable state before ``switching'' to another in a rapid transition. Such dynamics are in particular believed to be of functional relevance for neuronal networks where one observes sequential switching between patterns involving localized activity or synchrony~\cite{AshwinTimme2005, Britz2010, Tognoli2014}. One approach is to capture these dynamics on a macroscopic scale: one assigns each pattern an activity variable whose dynamics are then described by kinetic equations~\cite{Rabinovich2006}. The resulting equations are of generalized Lotka--Volterra type which support stable heteroclinic cycles, that is, cycles of hyperbolic equilibria which are connected by heteroclinic trajectories. The dynamics near such heteroclinic cycles now resemble sequential switching dynamics of activity patterns. Indeed, heteroclinic cycles and networks have been long studied in their own right; see~\cite{Ashwin2017} for a recent review.

However, such a qualitative approach fails to capture the dynamics on the level of single, nonlinearly interacting oscillators. In particular, is does not necessarily illuminate what ingredients of network topology and the interactions between oscillators~\cite{Stankovski2017} facilitate switching dynamics. If one assumes weak coupling, phase reduction provide a powerful tool to describe the dynamics of an oscillator network; in this reduction, each oscillator is represented by a single phase variable on the torus $\Tor := \R/2\pi\Z$ and the dynamics of the phases are described by a phase oscillator network. Simple networks of globally and identically coupled phase identical oscillators support heteroclinic cycles and networks~\cite{Ashwin2007}. The equilibria involved in these cycles are phase-locking patterns with oscillators in different clusters which have a constant phase difference. The symmetry properties of these networks, however, imply that all oscillators rotate with the same speed (frequency) on average---the network is globally frequency synchronized. In a neural network, this corresponds to all neurons to fire at the same average rate while the exact timing of firing changes.

By contrast, even networks of identical phase oscillators that are organized into different populations can give rise to dynamics where frequency synchrony is local to a population rather than global across the whole network. In other words, the interactions in a network of identical oscillators cause some units to evolve faster (or slower) than others. Dynamically invariant sets with this property  relate to ``chimeras''~\cite{Panaggio2015, Scholl2016, Omelchenko2018} who have---as patterns withe localized frequency synchrony---been hypothesized to play a functional role in the context of neuroscience~\cite{Shanahan2010, Tognoli2014, Bick2014a}. From a mathematical point of view, the notion of a weak chimera~\cite{Ashwin2014a, Bick2015c, Bick2015d} formalizes the definition of a dynamically invariant set with localized frequency synchrony for finite networks of identical phase oscillators.

Here we prove the existence of robust heteroclinic cycles between invariant sets with localized frequency synchrony in small phase oscillator networks with higher-order interactions. In contrast to attracting sets with localized frequency synchrony, the dynamics here induce sequential switching dynamics: which population of oscillators oscillates at a faster (or slower) rate will change over time. These results are of interest from several distinct perspectives. First, they illuminate how the interplay of network structure and functional interactions between units gives rise to heteroclinic dynamics in phase oscillator networks: we explicitly relate the network coupling parameters to the existence of heteroclinic cycles. Second, the results highlight how higher-order network interactions shape the (global) network dynamics; apart from higher harmonics in the phase interaction function, the higher-order interactions also include nonadditive interactions between oscillator phases which arise naturally in phase reductions of generically coupled identical oscillators~\cite{Ashwin2015a} or other resonant interactions~\cite{Komarov2013a}. Here, the interplay of higher-order interactions and nontrivial network topology and induces dynamics beyond (full) synchrony. Third, our results provide new examples of heteroclinic cycles in network dynamical systems relevant for applications. We highlight how these examples are distinct from situations previously considered in the literature.

This work is organized as follows. In this paper, we build on results in a recent brief communication~\cite{Bick2017c} to prove the existence of robust heteroclinic cycles between localized frequency synchrony; in a companion paper~\cite{Bick2018} we give a detailed discussion of the stability of such heteroclinic cycles (which may be embedded into larger heteroclinic structures). The remainder of this paper is organized as follows. In Section~\ref{sec:Prelim} we review some preliminaries on heteroclinic cycles and phase oscillator networks. In Section~\ref{sec:HetCycles3x2} we show existence of a heteroclinic cycle between localized patterns of frequency synchrony in networks consisting of three populations of two oscillators. In Section~\ref{sec:HetCycles3x3} we consider networks which consist of three populations of three oscillators and show the existence of a heteroclinic cycle of localized frequency synchrony; here, there are continua of saddle connections in two-dimensional invariant subspaces. Finally, in Section~\ref{sec:Numerics}, we give some numerical evidence that these phenomena persist in networks with more generic interactions before giving some concluding remarks.

\newcommand{\CS}{\K}
\newcommand{\HC}[2]{[#1\to#2]}

\section{Preliminaries}
\label{sec:Prelim}

\newcommand{\Mfld}{\mathcal{M}}

\subsection{Heteroclinic cycles}

Let~$\Mfld$ be a smooth $d$-dimensional manifold and let~$X$ be a smooth vector field on~$\Mfld$. For a hyperbolic equilibrium $\xi\in\Mfld$ let~$\Ws(\xi)$ and~$\Wu(\xi)$ denote its stable and unstable manifold, respectively.

\begin{defn}\label{defn:HetCycle}
A \emph{heteroclinic cycle~$\Cyc$} consists of a finite number of hyperbolic equilibria~$\xi_k\in\Mfld$, $k=1,\dotsc,Q$, together with heteroclinic trajectories
\[\HC{\xi_q}{\xi_{q+1}} \subset \Wu(\xi_q)\cap \Ws(\xi_{q+1})\neq\emptyset\]
where indices are taken modulo~$q$.
\end{defn}

For simplicity, we write $\Cyc=(\xi_1, \dotsc, \xi_Q)$. If~$\Mfld$ is a quotient of a higher-dimensional manifold and~$\Cyc$ is a heteroclinic cycle in~$\Mfld$, we also call the lift of~$\Cyc$ a heteroclinic cycle.

While heteroclinic cycles are in general a nongeneric phenomenon, they can be robust in dynamical systems with symmetry. Let~$\G$ be a finite group which acts on~$\Mfld$. For a subgroup $H\subset\G$ define the set $\Fix(H) = \set{x\in\Mfld}{\g x=x\ \forall\g\in H}$  of points fixed under~$H$; any $\Fix(H)$ is invariant under the flow generated by~$X$. For $x\in\Mfld$ let $\Gamma x=\set{\g x}{\g\in\G}$ denote its group orbit and $\Sigma(x) = \set{\g\in\G}{\g x = x}$ its \emph{isotropy subgroup}. Now assume that the smooth vector field~$X$ is $\G$-equivariant vector field on~$\Mfld$, that is, the action of the group commutes with~$X$.

Now let $\Cyc=(\xi_1, \dotsc, \xi_Q)$ be a heteroclinic cycle with the following properties. For an isotropy subgroup $\Sigma_q\subset\G$ write $P_q=\Fix(\Sigma_q)$. Now suppose that there exist~$\Sigma_q$ (and thus~$P_q$) such that $\xi_q, \xi_{q+1}\in P_q$, $\xi_{q+1}$ is a sink in~$P_q$, and $\HC{\xi_q}{\xi_{q+1}}\subset P_q$. Then~$\Cyc$ is \emph{robust} with respect to $\G$-equivariant perturbations of~$X$, that is, $\G$-equivariant vector fields close to~$X$ will have a heteroclinic cycle close to~$\Cyc$; see~\cite{Krupa1997} for details.

\subsubsection{Dissipative heteroclinic cycles}

Trajectories close to a heteroclinic network can show switching dynamics: qualitatively speaking, the trajectory will spend time close to one saddle~$\xi_q$ before a rapid transition to $\xi_{q+1}$. This is in particular the case when the heteroclinic cycle is attracting (in some sense); see for example~\cite{Ashwin2017} for a more elaborate discussion.

Here we consider a criterion where we expect attraction in some sense based on the local attraction properties at a hyperbolic equilibrium~$\xi$. Let~$\lambda^{(j)}$ denote the eigenvalues of the linearization of~$X$ at~$\xi$ ordered such that
\[
\Re\lambda^{(1)} \leq \dotsb \leq \Re\lambda^{(l)} < 0<\Re\lambda^{(l+1)}\leq \dotsb\leq\Re\lambda^{(d)}.
\]
The \emph{saddle value} (or saddle index) $\nu(\xi)=-\frac{\Re\lambda^{(l)}}{\Re\lambda^{(d)}}$ compares the rates of minimal attraction and maximal expansion close to~$\xi$; cf.~\cite{ShilnikovI, Afraimovich2016}. In particular, we say that~$\xi$ is \emph{dissipative} if $\nu(\xi)>1$. For a heteroclinic cycle $\Cyc=(\xi_1, \dotsc, \xi_Q)$, write $\nu_q:=\nu(\xi_q)$.

\begin{defn}
A heteroclinic cycle~$\Cyc$ is \emph{dissipative} if $\nu(\Cyc) := \prod_q\nu_q>1.$
\end{defn}

Intuitively speaking, a heteroclinic cycle is dissipative if there is more contraction of phase space than expansion close to the saddle points. Obviously, a heteroclinic cycle is dissipative if all its equilibria are dissipative. The following result shows that, subject to suitable additional assumptions, we may expect a dissipative heteroclinic cycle to be asymptotically stable.

\begin{rem}\label{rem:Stability}
Suppose that~$\G$ is finite and let~$\Cyc$ be a dissipative heteroclinic cycle in~$\R^d$ such that
\[\Wu(\xi_q)\sm\sset{\xi_q} \subset \bigcup_{\g\in\G}\Ws(\g\xi_{q+1}).\]
In other words, the entire unstable manifold of one saddle is contained in the stable manifold of the next saddle. Then the results in~\cite{Krupa1995} imply that~$\Cyc$ is asymptotically stable.
\end{rem}

Here we restrict ourselves to show the existence of dissipative heteroclinic cycles; we address the problem of stability explicitly in the companion paper~\cite{Bick2018}.

\subsubsection{Cyclic heteroclinic chains}

\newcommand{\Hf}{\mathfrak{H}}
\newcommand{\Gf}{\mathfrak{G}}

Definition~\ref{defn:HetCycle} of a heteroclinic cycle makes no assumptions on the number of heteroclinic trajectories between equilibria. Indeed, if there are unstable manifolds of dimension larger than one, there may be continua of heteroclinic trajectories. In such a case, the question about stability is more challenging as discussed in~\cite{Ashwin1998a}, in particular because the condition in Remark~\ref{rem:Stability} does not allow any set of points (however small) on the unstable manifold of one saddle to lie outside of the stable manifold of the next saddle.

We recall some definitions given in~\cite{Ashwin1998a}, adapted to our setting. Suppose that~$\Cyc$ is a heteroclinic cycle. We say that there is a (directed) edge between $\xi_p, \xi_q\in \Cyc$ if 
\[C_{pq}:=(\Wu(\xi_p)\cap \Ws(\xi_q ))\sm\sset{\xi_p,\xi_q}\neq \emptyset.\] 
This defines a directed graph~$\Gf(\Cyc)$. Let $\tilde\Gf(\Cyc) := \Gf(\Cyc)/\Gamma$ denote the quotient obtained by identifying vertices and connections on the same group orbits. The graph~$\tilde\Gf(\Cyc)$ is \emph{cyclic} if each vertex has unique edges entering and leaving it.

\begin{defn}
For a heteroclinic cycle~$\Cyc$ define the \emph{associated heteroclinic chain}
\[\Hf(\Cyc) = \bigcup_{(\xi_p, \xi_q)\in \Cyc^2} \Wu(\xi_p)\cap \Ws(\xi_q ).\]
If~$\tilde\Gf(\Cyc)$ is cyclic then~$\Hf(\Cyc)$ is a \emph{cyclic heteroclinic chain}.
\end{defn}

In contrast to Definition~\ref{defn:HetCycle}, the heteroclinic chain associated to a heteroclinic cycle now contain all heteroclinic trajectories which connect individual equilibria. Note that heteroclinic chains do not need to be closed in $\Mfld$: some part of~$\Wu(\xi_q)$ for some~$q$ may lie outside of the heteroclinic chain.

\subsection{Phase oscillator networks with nonpairwise interactions.}

Consider~$\maxpop$ populations of~$\maxdim$ phase oscillators where $\theta_{\sigma,k}\in\Tor$ denotes the phase of oscillator~$k$ in population~$\sigma$. Hence, the state of the oscillator network is determined by $\theta=(\theta_{1}, \dotsc, \theta_{\maxpop})\in\Tor^{\maxdim\maxpop}$ where $\theta_\sigma = (\theta_{\sigma, 1}, \dotsc, \theta_{\sigma, \maxdim})\in\Torn$ is the state of population~$\sigma$. Let $g_2, g_4: \Tor\to\R$ be $2\pi$-periodic functions and
\begin{equation}\label{eq:NetInteractions}
G_4(\theta_\tau; \phi) = \frac{1}{\maxdim^2}\sum_{m,n=1}^\maxdim g_4(\theta_{\tau, m}-\theta_{\tau,n}+\phi).
\end{equation}
Now consider the phase oscillator network where the phases of individual oscillators evolve according to
\begin{equation}\label{eq:DynMxN}
\begin{split}
\dot\theta_{\sigma,k} &= \omega+\sum_{j\neq k}\Big(g_2(\theta_{\sigma, j}-\theta_{\sigma, k})
-\K^{-} G_4(\theta_{\sigma-1}; \theta_{\sigma, j} - \theta_{\sigma, k})
\\&\qquad
\qquad\qquad\qquad+\K^{+} G_4(\theta_{\sigma+1}; \theta_{\sigma, j} - \theta_{\sigma, k})
\Big)
\end{split}
\end{equation}
where~$\omega$ is the intrinsic frequency of each oscillator\footnote{Without loss of generality we may set~$\omega$ to any value by going into a suitable co-rotating frame.}.
For these network dynamics, the phase interactions within populations are determined by the coupling (or phase interaction) function~$g_2$ whose arguments differences of oscillator pairs. By contrast, the interactions between populations, given by~\eqref{eq:NetInteractions}, are mediated by the nonpairwise interaction function~$g_4$ whose argument is a linear combination of four of the oscillators' phases.
The parameter~$\K^{-}>0$ determines the coupling strength to the previous population whereas~$\K^{+}>0$ determines the coupling strength to the previous population.
Here we assume $\K := \K^{-} = \K^{+}>0$ for simplicity.
For $g_4 = \cos$, the equations~\eqref{eq:DynMxN} approximate the dynamics of a phase oscillator networks with mean-field mediated bifurcation parameters up to rescaling of time as outlined in~\cite{Bick2017c}.

\subsubsection{Symmetries and invariant sets.}
Let~$\Sn$ denote the symmetric group of permutations of~$\maxdim$ symbols and write $\Zm = \Z/\maxpop\Z$. For a single oscillator population, the subset
\begin{align}
\Sync &:= \set{(\phi_1, \dotsc, \phi_\maxdim)\in\Torn}{\phi_k=\phi_{k+1}}
\intertext{corresponds to phases being in full \emph{phase synchrony} and}
\Splay &:= \set{(\phi_1, \dotsc, \phi_\maxdim)\in\Torn}{\phi_{k+1}=\phi_{k}+\frac{2\pi}{\maxdim}}
\end{align}
denotes a \emph{splay phase} configuration---typically we call any element of the group orbit~$\Sn\Splay$ a splay phase. For a network of interacting populations, we use the shorthand notation
\begin{subequations}\label{eq:SyncSplay}
\begin{align}
\theta_1\dotsb\theta_{\sigma-1}\Sp\theta_{\sigma+1}\dotsb\theta_{\maxpop} &= \lset{\theta\in\Tormn}{\theta_\sigma\in\Sync}\\
\theta_1\dotsb\theta_{\sigma-1}\Dp\theta_{\sigma+1}\dotsb\theta_{\maxpop} &= \lset{\theta\in\Tormn}{\theta_\sigma\in\Splay}
\end{align}
\end{subequations}
to indicate that population~$\sigma$ is fully phase synchronized or in splay phase. Consequently, $\Sp\dotsb\Sp$ ($\maxpop$~times) is the set of cluster states where all populations are fully phase synchronized and $\Dp\dotsb\Dp$ the set where all populations are in splay phase.

The network interactions in~\eqref{eq:DynMxN}, which include nonpairwise coupling, induce symmetries. More precisely, the equations~\eqref{eq:DynMxN} are $(\Sn\times\Tor)^\maxpop\rtimes \Zm$-equivariant. Each copy of~$\Tor$ acts by shifting all oscillator phases of one population by a common constant while~$\Sn$ permutes its oscillators. The action of~$\Zm$ permutes the populations cyclically. These actions do not necessarily commute.

\newcommand{\Tornmo}{\Tor^{\maxdim-1}}
\newcommand{\Torm}{\Tor^{\maxpop}}

To reduce the phase-shift symmetry~$\Torm$, we rewrite~\eqref{eq:DynMxN} in terms of phase differences $\psi_{\sigma,k} := \theta_{\sigma, k+1} - \theta_{\sigma, 1}$, $k=1, \dotsc, \maxdim-1$. Hence, with $\psi_\sigma\in\Tornmo$ we also write for example $\psi_1\Sync\dotsb\Sync$ (or simply $\psi\Sync\dotsb\Sync$ if the index is obvious) to indicate that all but the first population is phase synchronized.

The symmetries yield invariant subspaces on~$\Tormn$ for the dynamics given by~\eqref{eq:DynMxN}. In particular, the~$\Sn$ permutational symmetries within each population imply that the sets~\eqref{eq:SyncSplay} are invariant~\cite{Ashwin1992}. Moreover, any set of the form $\theta_1\dotsb \theta_\maxpop$ with $\theta_k\in\sset{\Sync,\Splay}$ is an equilibrium relative to the continuous~$\Torm$ symmetry, that is, the corresponding $\psi_1\dotsb\psi_\maxpop$ is an equilibrium in the reduced dynamics.

\subsubsection{Frequencies and localized frequency synchrony}
Suppose that $\maxpop > 1$ and let~$\theta:[0,\infty)\to\Tormn$ be a solution of~\eqref{eq:DynMxN} with initial condition $\theta(0)=\theta^0$. While $\dot\theta_{\sigma,k}(t)$ is the \emph{instantaneous angular frequency} of oscillator $(\sigma, k)$, define the \emph{asymptotic average angular frequency} of oscillator $(\sigma, k)$ by \begin{equation}
\Omega_{\sigma,k}(\theta^0):=\lim_{T\to\infty}\frac{1}{T}\int_0^T\dot\theta_{\sigma,k}(t)\ud t.
\end{equation}
Here we assume that these limit exists for all oscillators but this notion can be generalized to frequency intervals; see also~\cite{Bick2015c, Bick2015d}.

\begin{defn}
A connected flow-invariant invariant set $A\subset\Tormn$ has \emph{localized frequency synchrony} if for any~$\theta^0\in A$ we have $\Omega_{\sigma,k} = \Omega_{\sigma}$ and there exist indices $\sigma\neq\tau$ such that
\begin{align}
\Omega_{\sigma} \neq \Omega_{\tau}.
\end{align}
\end{defn}

\begin{rem}
Note that a chain-recurrent set~$A$ with localized frequency synchrony is a \emph{weak chimera} as defined in~\cite{Ashwin2014a}.
\end{rem}

\begin{lem}[Theorem~1 in~\cite{Ashwin2014a}]\label{lem:FreqSync}
The system symmetries imply $\Omega_{\sigma,k} = \Omega_{\sigma,j}$.
\end{lem}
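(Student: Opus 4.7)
The statement is a consequence of the $\Sn$ permutation symmetry of \eqref{eq:DynMxN} acting on oscillators within population $\sigma$. Let $\pi\in\Sn$ denote the transposition of oscillators $j$ and $k$ in population $\sigma$, which is an equivariance of the vector field. First I would write the frequency difference as a time average
\[
\Omega_{\sigma,k}(\theta^0) - \Omega_{\sigma,j}(\theta^0) = \lim_{T\to\infty}\frac{1}{T}\int_0^T\bigl(\dot\theta_{\sigma,k}(t) - \dot\theta_{\sigma,j}(t)\bigr)\udi t,
\]
and observe that the integrand is the time derivative of $\psi(t) := \theta_{\sigma,k}(t) - \theta_{\sigma,j}(t)$, which descends to a bounded coordinate on $\Tor$. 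Hence the frequency difference equals the rotation number of $\psi$ along the trajectory, and the integrand $F(\theta) := \dot\theta_{\sigma,k} - \dot\theta_{\sigma,j}$ satisfies $F\circ\pi = -F$ by equivariance.

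Next I would exploit that $\pi\theta(t)$ is also a solution of \eqref{eq:DynMxN} and that $\pi$ reverses the sign of $\psi$, so that the rotation number flips sign under the $\pi$-action on initial conditions; concretely, $\Omega_{\sigma,k}(\pi\theta^0) = \Omega_{\sigma,j}(\theta^0)$. The remaining task is to upgrade this equivariance to the pointwise equality $\Omega_{\sigma,k}(\theta^0) = \Omega_{\sigma,j}(\theta^0)$ for the same initial condition.

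The key step, and the main obstacle, is showing that the rotation number of $\psi$ must vanish. I would argue this by noting that the $\omega$-limit set of $\theta(t)$ is a compact invariant set in $\Tor^{\maxdim\maxpop}$, and that its chain-recurrent component is $\Sn$-invariant because the flow is $\Sn$-equivariant. On any such $\pi$-invariant compact invariant set, the time average of the $\pi$-odd function $F$ with respect to any flow-invariant probability measure must equal its own negative and hence vanish. Combined with the assumed existence of the asymptotic frequencies $\Omega_{\sigma,k}$, $\Omega_{\sigma,j}$ as honest limits, this yields $\Omega_{\sigma,k} = \Omega_{\sigma,j}$. Making the measure-theoretic step fully rigorous is carried out in the proof of Theorem~1 of~\cite{Ashwin2014a}, from which the lemma is cited.
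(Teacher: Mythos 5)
Your reduction of the statement to showing that the lifted phase difference $\psi(t) := \theta_{\sigma,k}(t)-\theta_{\sigma,j}(t)$ has zero rotation number is the right starting point, and the equivariance identity $\Omega_{\sigma,k}(\pi\theta^0)=\Omega_{\sigma,j}(\theta^0)$ is correct; but that identity only relates the frequencies of two \emph{different} trajectories and, as you acknowledge, does not give the asserted pointwise equality. The step you propose to close the gap does not work. Equivariance of the flow gives only that $\pi$ maps the $\omega$-limit set of $\theta^0$ onto the $\omega$-limit set of $\pi\theta^0$; the limit set of an individual trajectory is in general \emph{not} setwise $\pi$-invariant, since the group action permutes limit sets within a group orbit rather than fixing them. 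Even in the cases where the limit set happens to be $\pi$-invariant, the invariant probability measure obtained as a weak-$*$ limit of the empirical measures along your particular trajectory need not be $\pi$-invariant, so the cancellation ``average of a $\pi$-odd observable vanishes'' does not follow. Finally, the reference you defer to for the ``measure-theoretic step'' does not argue this way at all.

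The argument actually used in Theorem~1 of~\cite{Ashwin2014a} (and invoked implicitly in Section~4 of this paper, where it appears as preservation of the phase ordering within a population) is more elementary. The transposition exchanging oscillators $j$ and $k$ of population $\sigma$ lies in the $\Sn$-factor of the symmetry group, so its fixed-point subspace $\set{\theta}{\theta_{\sigma,j}=\theta_{\sigma,k}}$ is flow-invariant; by uniqueness of solutions no trajectory can cross it. Hence the continuous lift of $\psi$ to $\R$ is trapped in a single interval $(2\pi n, 2\pi(n+1))$ for some fixed $n\in\Z$ (or is identically a multiple of $2\pi$) for all time, so it is bounded, and
\[
\Omega_{\sigma,k}(\theta^0)-\Omega_{\sigma,j}(\theta^0)=\lim_{T\to\infty}\frac{\psi(T)-\psi(0)}{T}=0
\]
with no recurrence, invariant-measure, or ergodicity hypotheses required. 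You should replace your third paragraph with this trapping argument.
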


\section{Heteroclinic Cycles for Two Oscillators per Population}
\label{sec:HetCycles3x2}

\newcommand{\tGf}{\tilde G_4}

In this section, we show the existence of robust heteroclinic cycles for networks of $\maxpop=3$ populations of $\maxdim=2$ oscillators. Let $\gt, \gf:\Tor\to\R$ denote the  $2\pi$-periodic coupling functions which govern the interactions within and between populations as above. With
\begin{equation}\label{eq:NPCN2}
\tGf(\theta_\tau; \phi) = \frac{1}{4}\big(g_4(\theta_{\tau, 1}-\theta_{\tau,2}+\phi)+g_4(\theta_{\tau, 2}-\theta_{\tau,1}+\phi)\big)
\end{equation}
the network dynamics~\eqref{eq:DynMxN} can be written as
\begin{subequations}\label{eq:Dyn3x2}
\begin{align}
\begin{split}
\dot\theta_{\sigma,1} &= \omega+\gt(\theta_{\sigma,2}-\theta_{\sigma,1})
-\K\tGf(\theta_{\sigma-1}; \theta_{\sigma,2}-\theta_{\sigma,1})
\\&\qquad\qquad
+\K\tGf(\theta_{\sigma+1}; \theta_{\sigma,2}-\theta_{\sigma,1}),
\end{split}\\
\begin{split}
\dot\theta_{\sigma,2} &= \omega+\gt(\theta_{\sigma,1}-\theta_{\sigma,2})
-\K\tGf(\theta_{\sigma-1}; \theta_{\sigma,1}-\theta_{\sigma,2})
\\&\qquad\qquad
+\K\tGf(\theta_{\sigma+1}; \theta_{\sigma,1}-\theta_{\sigma,2}).
\end{split}
\end{align}
\end{subequations}
By reducing the~$\Torm$ symmetry, we obtain the dynamics of the phase-differences as 
\begin{align}\label{eq:Dyn3x2Red}
\begin{split}
\dot\psi_\sigma &=
2\gtm(\psi_\sigma)
-\frac{\K}{2}\big(
\gfm(\psi_{\sigma-1}+\psi_\sigma)
+\gfm(\psi_\sigma-\psi_{\sigma-1})\big)
 \\&\qquad
+\frac{\K}{2}\big(
\gfm(\psi_{\sigma+1}+\psi_\sigma)
+\gfm(\psi_\sigma-\psi_{\sigma+1})\big)
\end{split}
\end{align}
where $\glm(\vth) = \frac{1}{2}(\gl(-\vth)-\gl(\vth))$, $\ell\in\sset{2,4}$, are odd. For $\glm$ we have $\glm'(\vth) = -\gl'(\vth)$.

The phase space of~\eqref{eq:Dyn3x2} is organized by invariant subspaces as sketched in Figure~\ref{fig:InvSpaces3x2}. For completeness, we characterize $\SSS$, $\DDD$ before we focus on sets with localized frequency synchrony.

In the reduced dynamics~\eqref{eq:Dyn3x2Red}, both~$\SSS$ and~$\DDD$ are equilibria. The equilibrium $\SSS = (0, 0, 0)$ lies in the intersection of the invariant subspaces $\pSS$, $\SpS$, and~$\SSp$. On these subspaces, the dynamics are given by
\begin{align}
\dot\psi &= 2\gtm(\psi).
\end{align}
Thus, the linear stability of $\SSS$ is determined by the triple eigenvalue
\begin{align}
\lambda^\SSS &= -2\gt'(0)
\end{align}
which correspond to a perturbation separating the phases of one population.
Similarly, $\DDD = (\pi, \pi, \pi)$ lies in the intersection of the invariant subspaces $\pDD$, $\DpD$, and~$\DDp$. On these invariant subspaces, the dynamics are determined by
\begin{equation}\label{eq:DynPSS}
\dot\psi = 2\gtm(\psi),
\end{equation}
as well. Linearizing at $\psi=\pi$ yields
\begin{equation}
\lambda^\DDD = -2\gt'(\pi)
\end{equation}
which determines the stability of~$\DDD$. In the full system~\eqref{eq:Dyn3x2}, there are three additional zero eigenvalues for eigenvectors along the group orbit of the phase-shift symmetry. Note that the linear stability of~$\SSS$, $\DDD$ are fully determined by the pairwise coupling~$\gt$ within populations.

\newcommand{\figonepanell}[1]{\raisebox{3.3cm}{\textbf{(#1)}}\hspace{-12pt}}

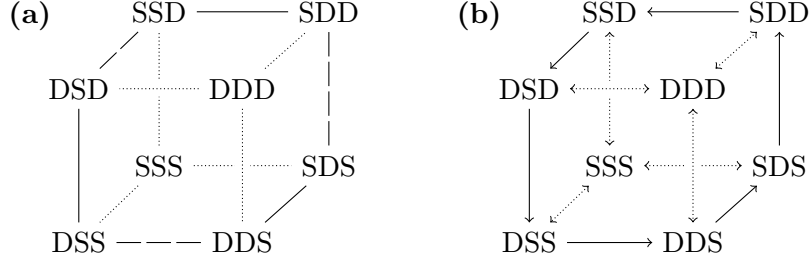
\begin{figure}
\centerline{\mbox{
\figonepanell{a}
\begin{tikzpicture}
\tikzstyle{breakd}= [dash pattern=on 23\pgflinewidth off 2pt]
  \matrix (m) [matrix of math nodes, row sep=1.3em,
    column sep=0.1em, ampersand replacement=\&]{
    \& \SSD \& \& \SDD \\
    \DSD \& \& \DDD \& \\
    \& \SSS \& \& \SDS \\
    \DSS \& \& \DDS \& \\};
  \path[-stealth]
    (m-1-2) edge [-,breakd] (m-2-1)
    (m-1-4) edge [-] (m-1-2)
    (m-2-1) edge [-] (m-4-1)
    (m-3-2) edge [-,densely dotted] (m-3-4)
            edge [-,densely dotted] (m-4-1)
            edge [-,densely dotted] (m-1-2)            
    (m-4-1) edge [-,breakd] (m-4-3)
    (m-4-3) edge [-] (m-3-4)    
    (m-3-4) edge [-,breakd] (m-1-4)
    (m-2-3) edge [-,line width=6pt,draw=white] (m-4-3)
            edge [-,line width=6pt,draw=white] (m-2-1)
            edge [-,densely dotted] (m-4-3)
            edge [-,densely dotted] (m-1-4)
            edge [-,densely dotted] (m-2-1);            
\end{tikzpicture}
}
\qquad
\figonepanell{b}
\begin{tikzpicture}
\tikzstyle{breakd}= [dash pattern=on 23\pgflinewidth off 2pt]
  \matrix (m) [matrix of math nodes, row sep=1.3em,
    column sep=0.1em, ampersand replacement=\&]{
    \& \SSD \& \& \SDD \\
    \DSD \& \& \DDD \& \\
    \& \SSS \& \& \SDS \\
    \DSS \& \& \DDS \& \\};
  \path[-stealth]
    (m-1-2) edge [->](m-2-1)
    (m-1-4) edge [->] (m-1-2)
    (m-2-1) edge [->] (m-4-1)
    (m-3-2) edge [<->,densely dotted] (m-3-4)
            edge [<->,densely dotted] (m-4-1)
            edge [<->,densely dotted] (m-1-2)            
    (m-4-1) edge [->] (m-4-3)
    (m-4-3) edge [->] (m-3-4)    
    (m-3-4) edge [->] (m-1-4)
    (m-2-3) edge [-,line width=6pt,draw=white] (m-4-3)
            edge [-,line width=6pt,draw=white] (m-2-1)
            edge [<->,densely dotted] (m-4-3)
            edge [<->,densely dotted] (m-1-4)
            edge [<->,densely dotted] (m-2-1);
\end{tikzpicture}
}
\caption{\label{fig:InvSpaces3x2}Networks of~$\maxpop=3$ oscillator populations with dynamics given by~\eqref{eq:DynMxN} give rise to relative equilibria ($\SSS, \dotsc$) which are connected by invariant subspaces (lines).
Panel~(a) indicates the dynamics on each invariant subspace: for $\maxdim=2$ the dynamics on dotted lines the dynamics are~\eqref{eq:DynPSS}, on broken lines by~\eqref{eq:DynDPS}, and on solid lines by~\eqref{eq:DynDSP}.
Panel~(b) shows a heteroclinic cycle between these equilibria.
}
\end{figure}

\subsection{Saddle invariant sets with localized frequency synchrony}

For the remainder of this section, we will consider~\eqref{eq:NPCN2} with interaction given by the coupling functions
\begin{subequations}\label{eq:CplngFuncN2}
\begin{align}
\gt(\vth) &= \sin(\vth+\at)-r\sin(2(\vth+\at)),\\
\gf(\vth) &= \sin(\vth+\af).
\end{align}
\end{subequations}
These interactions include higher harmonics; in particular, for $r>0$ the calculations above imply that both~$\SSS$ and~$\DDD$ are linearly stable. Moreover, for $\alpha:=\at=\af-\frac{\pi}{2}$ we obtain the same parametrization as in~\cite{Bick2017c, Bick2018}.

In the following we estimate the asymptotic average frequencies of $\DSS$, $\DDS$, $\SDS$, $\SDD$, $\SSD$, $\DSD$, and $\DSS$. Note that it suffices to consider $\DSS$, $\DDS$ since the latter four are their images under the~$\Zm$ action which permutes populations.

\begin{lem}\label{lem:WeakChimeras}
The sets $\DSS$, $\DDS$ and their images under the~$\Zm$ symmetry have localized frequency synchrony as subsets of~$\Tormn$ if
\begin{equation}\label{eq:CondWeakChimeras}\tag{C$\Omega$N2}
\abs{2\sin(\alpha_2)}-2\K>0.
\end{equation}
\end{lem}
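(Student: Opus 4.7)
The plan is to directly compute the population-wise asymptotic average frequencies $\Omega_\sigma$ on each candidate invariant set and to show that~\eqref{eq:CondWeakChimeras} forces two of them to differ. Three structural observations allow me to bypass any time-averaging. First, by the $\Zm$-equivariance of~\eqref{eq:Dyn3x2}, localized frequency synchrony on $\DSS$ (respectively $\DDS$) transfers verbatim to their cyclic images, so it suffices to treat these two sets. Second, both $\DSS$ and $\DDS$ are zeros of the reduced phase-difference dynamics~\eqref{eq:Dyn3x2Red}, hence relative equilibria for the $\Torm$ phase-shift action; since the right-hand side of~\eqref{eq:Dyn3x2} depends only on phase differences, $\dot\theta_{\sigma,k}$ is constant in time along any trajectory in such a set, and $\Omega_{\sigma,k}=\dot\theta_{\sigma,k}$ is simply the (constant) instantaneous frequency. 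Third, Lemma~\ref{lem:FreqSync} collapses $\Omega_{\sigma,k}$ to a value $\Omega_\sigma$ independent of $k$, so I only need to evaluate $\dot\theta_{\sigma,1}$ at a single representative point of each set.

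The computation itself rests on one input: for $\maxdim=2$ a direct calculation from~\eqref{eq:NPCN2} gives $\tGf(\theta_\tau;\phi)=\tfrac12 g_4(\phi)$ when $\theta_\tau\in\Sp$ and $\tGf(\theta_\tau;\phi)=\tfrac12 g_4(\phi+\pi)$ when $\theta_\tau\in\Dp$, so that $|\tGf(\theta_\tau;\phi)|\le\tfrac12$ for $g_4(\vth)=\sin(\vth+\alpha_4)$. Substituting into~\eqref{eq:Dyn3x2} on $\DSS$ (where $\psi_1=\pi$, $\psi_2=\psi_3=0$), the $\K$-contribution to $\dot\theta_{1,1}$ cancels identically because the two neighbours of population~$1$ are both in $\Sp$; on populations $2$ and $3$ one neighbour is in $\Sp$ and the other in $\Dp$, leaving a net term $\pm\K\sin(\alpha_4)$. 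Using that $g_2(0)-g_2(\pi)=2\sin(\alpha_2)$ (the second-harmonic $r$-term has period $\pi$ in $\vth$ and drops out), one obtains, for instance, $\Omega_2-\Omega_1=2\sin(\alpha_2)+\K\sin(\alpha_4)$. An analogous computation on $\DDS$, with the identity cancellation now occurring for the isolated synchronous population~$3$, yields $\Omega_1-\Omega_3=-2\sin(\alpha_2)+\K\sin(\alpha_4)$.

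The final step is to bound each of the four $\K\tGf$-contributions appearing in $\Omega_\sigma-\Omega_\tau$ separately by $\K/2$ using $|\tGf|\le\tfrac12$. The triangle inequality then gives
\[
|\Omega_\sigma-\Omega_\tau|\;\ge\;|2\sin(\alpha_2)|-2\K,
\]
which is strictly positive under~\eqref{eq:CondWeakChimeras}. Hence $\Omega_\sigma\neq\Omega_\tau$ for the relevant pair and the set has localized frequency synchrony. I do not anticipate a genuine obstacle; the argument is essentially bookkeeping, and the only place to be careful is tracking the $\pm\K$ signs attached to the $\sigma\pm 1$ neighbour indices in~\eqref{eq:Dyn3x2} so that the identity cancellation for the ``isolated'' population appears in the right place for both $\DSS$ (population~$1$) and $\DDS$ (population~$3$).
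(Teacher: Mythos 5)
Your proposal is correct and follows essentially the same route as the paper: invoke Lemma~\ref{lem:FreqSync}, compare the intra-population drives $\gt(0)$ and $\gt(\pi)$ (whose difference is $2\sin(\at)$ since the $r$-term cancels), and bound the inter-population coupling contribution by $2\K$ using $|\tGf|\le\tfrac12$. The only difference is cosmetic: you exploit the relative-equilibrium structure to compute $\Omega_\sigma$ exactly (obtaining, e.g., $\Omega_2-\Omega_1=2\sin(\at)+\K\sin(\af)$, which is in fact sharper than the stated bound), whereas the paper argues by a perturbation estimate from the uncoupled case $\K=0$.
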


\begin{proof}
By Lemma~\ref{lem:FreqSync} we have $\Omega_{\sigma} = \Omega_{\sigma,k}$ for all $k=1, \dotsc,\maxdim$, that is, all oscillators within a single population have the same asymptotic average angular frequency.

If the populations are uncoupled, $\K=0$, we have $\Omega_{1}(\theta^0) = \omega+\gt(0)$ for $\theta^0\in \Sp\psi_2\psi_3$ and $\Omega_{2}(\theta^0) = \omega+\gt(\pi)$ for $\theta^0\in \psi_1\Dp\psi_3$. This implies that for $\K\geq 0$ and coupling~\eqref{eq:CplngFuncN2} we have
$\abs{\gt(0)-\gt(\pi)}-2K = \abs{2\sin(\at)}-2K \leq \abs{\Omega_{1}(\theta^0)-\Omega_{2}(\theta^0)}$
for $\theta^0\in\Sp\Dp\psi_3$.
Consequently, $\DSS$, $\DDS$ and their symmetric counterparts have localized frequency synchrony on~$\Tormn$ if~\eqref{eq:CondWeakChimeras} is satisfied.
\end{proof}

Note that this is clearly only a sufficient condition; it suffices for our purpose but can be made better by evaluating the asymptotic average angular frequencies explicitly.

As~$\SSS$ and~$\DDD$, the sets $\DSS$, $\DDS$ are equilibria for the reduced system~\eqref{eq:Dyn3x2Red} with~$\DSS=(\pi,0,0)$, $\DDS=(\pi,\pi,0)$. On the invariant subspace~$\DpS$ we have
\begin{align}
\dot\psi &= 2\gtm(\psi) +\K(\gfm(\psi)-\gfm(\psi+\pi))\label{eq:DynDPS}
\intertext{and on $\pDS$ we have dynamics}
\dot\psi &= 2\gtm(\psi)+\K(\gfm(\psi+\pi)-\gfm(\psi)).\label{eq:DynDSP}
\end{align}
To obtain the linear stability of~$\DSS$, linearize~\eqref{eq:DynPSS} at $\psi=\pi$, \eqref{eq:DynDPS} at $\psi=0$, and~\eqref{eq:DynDSP} at $\psi=0$. With coupling~\eqref{eq:CplngFuncN2} this gives
\begin{subequations}\label{eq:StabDSS}
\begin{align}
\lambda^\DSS_1 &= 2\cos(\at)+4r\cos(2\at),\\
\lambda^\DSS_2 &= -2\K\cos(\af)-2\cos(\at)+4r\cos(2\at),\\
\lambda^\DSS_3 &= 2\K\cos(\af)-2\cos(\at)+4r\cos(2\at).
\end{align}
\end{subequations}
for the linear stability of the first, second, and third population, respectively. Similarly, linearizing~\eqref{eq:DynDSP} at $\psi=\pi$, \eqref{eq:DynDPS} at $\psi=\pi$, and \eqref{eq:DynPSS} at $\psi=0$, we obtain
\begin{subequations}\label{eq:StabDDS}
\begin{align}
\lambda^\DDS_1 &= -2\K\cos(\af)+2\cos(\at)+4r\cos(2\at),\\
\lambda^\DDS_2 &= 2\K\cos(\af)+2\cos(\at)+4r\cos(2\at),\\
\lambda^\DDS_3 &= -2\cos(\at)+4r\cos(2\at).
\end{align}
\end{subequations}
for the eigenvalues which determine the linear stability of~$\DDS$.

\subsection{Heteroclinic cycles}

In the previous section, we evaluated the local properties of the dynamically invariant sets~$\DSS$ and~$\DDS$. Heteroclinic cycles require conditions on the local stability as well as the existence of global saddle connections.

\begin{lem}\label{lem:HetCycleN2}
Suppose that
\begin{align}
\label{eq:CondSaddles}\tag{C$\lambda$N2'}
\lambda^\DSS_3&<0<\lambda^\DSS_2, &
\lambda^\DDS_2&<0<\lambda^\DDS_1.
\end{align}
Then the network~\eqref{eq:Dyn3x2} with $\maxpop=3$ populations of $\maxdim=2$ phase oscillators with dynamics~\eqref{eq:Dyn3x2} and coupling functions~\eqref{eq:CplngFuncN2} has a heteroclinic cycle
\begin{equation*}\label{eq:HetCycle1}
\Cyc_2 = (\DSS, \DDS, \SDS, \SDD, \SSD, \DSD, \DSS).
\end{equation*}
\end{lem}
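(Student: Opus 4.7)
The plan is to leverage Figure~\ref{fig:InvSpaces3x2}: each consecutive pair of equilibria in the proposed cycle $\Cyc_2$ lies in a common one-dimensional invariant subspace (an edge of the cube), so on that edge the reduced dynamics is a flow on a single circle. The six edges fall into two orbits under the cyclic $\Z_3$-action permuting populations: $\DSS\to\DDS$, $\SDS\to\SDD$, $\SSD\to\DSD$ on edges of type~$\DpS$, and $\DDS\to\SDS$, $\SDD\to\SSD$, $\DSD\to\DSS$ on edges of type~$\pDS$. It therefore suffices to produce one heteroclinic connection in each $\Z_3$-orbit and appeal to equivariance for the rest.

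For a representative edge, say $\DpS$, I would substitute the coupling~\eqref{eq:CplngFuncN2} into~\eqref{eq:DynDPS}. Using the identities $\gtm(\psi)=-\cos(\at)\sin\psi+r\cos(2\at)\sin(2\psi)$ and $\gfm(\psi)-\gfm(\psi+\pi)=-2\cos(\af)\sin\psi$ together with $\sin(2\psi)=2\sin\psi\cos\psi$, the vector field factors as $\dot\psi=\sin(\psi)\bigl[A+B\cos\psi\bigr]$ for explicit constants $A=-2\cos(\at)-2\K\cos(\af)$ and $B=4r\cos(2\at)$. The analogous factorisation $\dot\psi=\sin(\psi)\bigl[A'+B'\cos\psi\bigr]$ holds on $\pDS$. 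Consequently the only equilibria on each circle are $\psi=0$, $\psi=\pi$, together with the symmetric pair $\psi=\pm\arccos(-A/B)$, which \emph{exists if and only if} $|A|\le|B|$.

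Linearising at $\psi=0$ gives the eigenvalue $A+B$ and at $\psi=\pi$ gives $B-A$. A direct comparison with~\eqref{eq:StabDSS}--\eqref{eq:StabDDS} (and using the $\Z_3$-symmetry identification $\lambda^\SDS_1=\lambda^\DSS_3$) identifies these linearisations with the pairs $(\lambda^\DSS_2,\lambda^\DDS_2)$ on $\DpS$ and $(\lambda^\SDS_1,\lambda^\DDS_1)$ on $\pDS$. Thus the inequalities~\eqref{eq:CondSaddles} are exactly the statement that on each of the two circles, one endpoint is a hyperbolic source and the other a hyperbolic sink along the edge. An elementary sign analysis of the same inequalities then shows $A>|B|$ on $\DpS$ and $|A'|>|B'|$ on $\pDS$, ruling out the third pair of equilibria. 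With a unique source and a unique sink on each circle, the one-dimensional flow furnishes a heteroclinic trajectory between them along either arc; selecting one gives the desired connection.

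Stringing the six such connections together yields $\Cyc_2$, and robustness under $(\Sn\times\Tor)^\maxpop\rtimes\Zm$-equivariant perturbations is automatic via the general criterion recalled in Section~\ref{sec:Prelim}, since each connection lives in a fixed-point subspace of an appropriate isotropy subgroup and terminates at a sink within that subspace. The main obstacle in the plan is the non-existence of extra equilibria on each invariant circle: without it, a stable intermediate zero could disconnect source from sink and destroy the connection. The specific form of the coupling~\eqref{eq:CplngFuncN2} is what makes this tractable, because it reduces the obstruction to the single scalar inequality $|A|>|B|$ which is precisely equivalent to~\eqref{eq:CondSaddles}; for more general coupling functions additional hypotheses on the higher harmonics would have to be introduced.
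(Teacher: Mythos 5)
Your proposal is correct and follows essentially the same route as the paper: factor the one-dimensional dynamics on each invariant edge as $\sin(\psi)\,(A+B\cos\psi)$, identify the linearizations at $\psi=0,\pi$ with the transverse eigenvalues in~\eqref{eq:StabDSS}--\eqref{eq:StabDDS}, observe that~\eqref{eq:CondSaddles} forces $\abs{A}>\abs{B}$ so no interior equilibria exist, and invoke the $\Z_3$-action for the remaining connections. Your sign analysis on the $\pDS$ edge (where the correct conclusion is $A'<-\abs{B'}$ rather than $A'>\abs{B'}$) is in fact stated more carefully than in the paper's own proof, which asserts the inequality with the wrong orientation but reaches the same (correct) conclusion that the only zeros are at $\sin\psi=0$.
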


\begin{proof}
We first show that there is a heteroclinic connection $\HC{\DSS}{\DDS}$. Note that~\eqref{eq:CondSaddles} implies $\Wu(\DSS),\Ws(\DDS)\subset\pDS$. To show that $\Wu(\DSS)\cap \Ws(\DDS)\neq\emptyset$, consider the dynamics on~$\DpS$: For the coupling functions~\eqref{eq:CplngFuncN2} the dynamics~\eqref{eq:DynDPS} on~$\DpS$ evaluate to
\begin{align*}
\dot\psi &= 
-2\cos(\at)\sin(\psi)
+2r\cos(2\at)\sin(2\psi)
-2\K\cos(\af)\sin(\psi)\\
&= \sin(\psi)(A_\DpS + B_\DpS\cos(\psi)),
\end{align*}
where
$A_\DpS=-2\K\cos(\af)-2\cos(\at)$,
$B_\DpS=4r\cos(2\at)$.
Since
$\lambda^\DSS_2 = A_\DpS+B_\DpS$ and $\lambda^\DDS_2 = -A_\DpS+B_\DpS$, conditions~\eqref{eq:CondSaddles} yields
\[\pm B_\DpS < A_\DpS.\]
Consequently, given~\eqref{eq:CondSaddles}, there are no equilibria on~$\DpS$ other than the point where $\sin(\psi)=0$---these correspond to~$\DSS$ and~$\DDS$. Hence there is a heteroclinic connection $\HC{\DSS}{\DDS}$.

Similarly, to show that there is $\HC{\DDS}{\SDS}$, note that by~\eqref{eq:CondSaddles} we have $\Wu(\DDS), \Ws(\SDS)\subset\pDS$. The dynamics~\eqref{eq:DynDSP} on~$\pDS$ are
\begin{align*}
\dot\psi &= 
-2\cos(\at)\sin(\psi)
+2r\cos(2\at)\sin(2\psi)
+2\K\cos(\af)\sin(\psi)\\
&= \sin(\psi)(A_\pDS + B_\pDS\cos(\psi)).
\end{align*}
By~\eqref{eq:CondSaddles} and the~$\Z_3$ symmetry we have
\[\pm B_\pDS < A_\pDS.\]
as above. This implies that there is a heteroclinic connection $\HC{\DDS}{\SDS}$ if~\eqref{eq:CondSaddles} holds.
\end{proof}

\begin{rem}
Note that here the local conditions~\eqref{eq:CondSaddles} suffice to guarantee the existence of global saddle connections. This indicates that more than two harmonics are needed for generic bifurcation behavior; cf.~\cite[Corollary~1]{Ashwin2016}. 
\end{rem}

By replacing~\eqref{eq:CondSaddles} with a stricter set of conditions we immediately obtain the following statement.

\begin{lem}
The heteroclinic cycle~$\Cyc_2$ is dissipative if
\begin{equation}
\label{eq:CondSaddlesDiss}\tag{C$\lambda$N2}
\begin{split}
\lambda^\DSS_3<\lambda^\DSS_1&<0<\lambda^\DSS_2,\\
\lambda^\DDS_2<\lambda^\DDS_3&<0<\lambda^\DDS_1,
\end{split}
\end{equation}
and
\begin{align}\label{eq:CondSaddleValues}\tag{C$\nu$N2}
\nu^\DSS=-\frac{\lambda^\DSS_1}{\lambda^\DSS_2}&>1, & \nu^\DDS=-\frac{\lambda^\DDS_3}{\lambda^\DDS_1}&>1.
\end{align}
\end{lem}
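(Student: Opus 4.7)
The plan is to verify that the saddle indices defined in the statement are indeed the saddle values in the sense of Section~\ref{sec:Prelim}, then use the $\Zm=\Z_3$ symmetry to reduce the product of all six saddle values along $\Cyc_2$ to a power of the two quantities $\nu^\DSS$ and $\nu^\DDS$.

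First I would identify the spectrum at each saddle of $\Cyc_2$ in the reduced system~\eqref{eq:Dyn3x2Red}, where the phase space is three-dimensional and each equilibrium in $\Cyc_2$ has three eigenvalues. For $\DSS$, the hypothesis~\eqref{eq:CondSaddlesDiss} gives the ordering $\lambda^\DSS_3 < \lambda^\DSS_1 < 0 < \lambda^\DSS_2$, so among the negative eigenvalues the one closest to zero is $\lambda^\DSS_1$, while the (only) positive eigenvalue $\lambda^\DSS_2$ is also the most expanding. In the notation of the saddle-value definition this means $\lambda^{(l)}=\lambda^\DSS_1$ and $\lambda^{(d)}=\lambda^\DSS_2$, and hence the saddle value at $\DSS$ is $\nu(\DSS)=-\lambda^\DSS_1/\lambda^\DSS_2=\nu^\DSS$. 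An identical argument, using $\lambda^\DDS_2<\lambda^\DDS_3<0<\lambda^\DDS_1$, yields $\lambda^{(l)}=\lambda^\DDS_3$, $\lambda^{(d)}=\lambda^\DDS_1$, and $\nu(\DDS)=\nu^\DDS$.

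Next I would use the cyclic $\Zm$-action that permutes the three populations. The remaining four saddles in the cycle satisfy $\SDS,\SSD\in\Zm\cdot\DSS$ and $\SDD,\DSD\in\Zm\cdot\DDS$. Since the vector field is $\Zm$-equivariant, the linearization at any image $\g\xi$ is conjugate to the linearization at $\xi$ and hence has the same spectrum. Thus $\nu(\SDS)=\nu(\SSD)=\nu^\DSS$ and $\nu(\SDD)=\nu(\DSD)=\nu^\DDS$.

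Combining these observations, the product over the cycle factors as
\begin{equation*}
\nu(\Cyc_2)=\nu(\DSS)\nu(\DDS)\nu(\SDS)\nu(\SDD)\nu(\SSD)\nu(\DSD)=(\nu^\DSS\,\nu^\DDS)^3,
\end{equation*}
which is strictly greater than one by~\eqref{eq:CondSaddleValues}. Therefore $\Cyc_2$ is dissipative in the sense of the definition preceding Remark~\ref{rem:Stability}. There is essentially no hard step: the only thing to be careful about is matching the abstract ordering $\lambda^{(1)}\le\dotsb\le\lambda^{(l)}<0<\lambda^{(l+1)}\le\dotsb\le\lambda^{(d)}$ with the indexing $(\lambda^\DSS_1,\lambda^\DSS_2,\lambda^\DSS_3)$ and $(\lambda^\DDS_1,\lambda^\DDS_2,\lambda^\DDS_3)$, which is precisely what the strengthened local condition~\eqref{eq:CondSaddlesDiss} fixes.
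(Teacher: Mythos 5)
Your argument is correct and is exactly the reasoning the paper leaves implicit (the lemma is stated as an immediate consequence of strengthening the local conditions): \eqref{eq:CondSaddlesDiss} pins down which eigenvalue is the weakest contracting and which is the strongest expanding at each saddle, so $\nu^\DSS$ and $\nu^\DDS$ are the saddle values, and $\Zm$-equivariance gives $\nu(\Cyc_2)=(\nu^\DSS\nu^\DDS)^3>1$. Nothing further is needed.
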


This leads to the main result of this section.

\begin{thm}\label{thm:HetCycleN2}
The network of $\maxpop=3$ populations of $\maxdim=2$ phase oscillators with dynamics~\eqref{eq:Dyn3x2} and coupling functions~\eqref{eq:CplngFuncN2} supports a robust dissipative heteroclinic cycle
\begin{equation*}\label{eq:HetCycle}
\Cyc_2 = (\DSS, \DDS, \SDS, \SDD, \SSD, \DSD, \DSS)
\end{equation*}
between dynamically invariant sets with localized frequency synchrony.
\end{thm}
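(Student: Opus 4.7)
The plan is to obtain Theorem~\ref{thm:HetCycleN2} as a packaging of the three preceding lemmas together with the robustness criterion for symmetric heteroclinic cycles recalled in Section~\ref{sec:Prelim}. The existence of the cycle $\Cyc_2$ as a set of hyperbolic equilibria connected by heteroclinic trajectories is exactly Lemma~\ref{lem:HetCycleN2} under hypothesis~\eqref{eq:CondSaddles}. The dissipativity clause is the content of the lemma immediately preceding Theorem~\ref{thm:HetCycleN2}, which requires the stronger~\eqref{eq:CondSaddlesDiss} together with the saddle-value bounds~\eqref{eq:CondSaddleValues}. Localized frequency synchrony of each of the six saddles follows from Lemma~\ref{lem:WeakChimeras} under~\eqref{eq:CondWeakChimeras}, since each $\xi_q$ is the $\Zm$-image of either $\DSS$ or $\DDS$ and the hypothesis is $\Zm$-invariant. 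As~\eqref{eq:CondSaddlesDiss} strictly strengthens~\eqref{eq:CondSaddles}, the three numbered hypotheses jointly supply all three structural ingredients.

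The remaining content is robustness. The vector field associated with~\eqref{eq:Dyn3x2} and coupling~\eqref{eq:CplngFuncN2} is $\G$-equivariant for $\G=(\Sk{2}\times\Tor)^3\rtimes\Zm$. For the representative edge $\HC{\DSS}{\DDS}\subset\DpS$, the one-dimensional reduced invariant subspace $\DpS$ is the fixed-point set $\Fix(\Sigma)$ of the isotropy subgroup $\Sigma\subset\G$ generated by the combined transposition-and-$\pi$-shift in population~$1$ (which fixes any splay-phase pair) and the transposition in population~$3$ (which fixes any synchronized pair). Within this $\Fix(\Sigma)$ the target $\DDS$ is a sink since $\lambda^\DDS_2<0$ by~\eqref{eq:CondSaddlesDiss}. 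An identical argument applies to $\HC{\DDS}{\SDS}\subset\pDS$, where $\SDS$ is a sink in $\pDS$ because the relevant eigenvalue equals $\lambda^\DSS_3<0$ after conjugating by the $\Zm$-action. The remaining four edges of $\Cyc_2$ are $\Zm$-images of these two and inherit the same property. Edge-by-edge application of the criterion from~\cite{Krupa1997} recalled after Definition~\ref{defn:HetCycle} yields robustness of $\Cyc_2$ under $\G$-equivariant perturbations.

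Finally one must observe that the region of parameter space carved out by~\eqref{eq:CondWeakChimeras}, \eqref{eq:CondSaddlesDiss}, and~\eqref{eq:CondSaddleValues} is nonempty and open. The closed-form expressions~\eqref{eq:StabDSS} and~\eqref{eq:StabDDS} are linear in $\K\cos(\af)$ and $r\cos(2\at)$, so after noting that~\eqref{eq:CondSaddlesDiss} forces $\cos(\at)<0$, $r\cos(2\at)<0$, and $\cos(\af)<0$, one can open $\lambda^\DSS_2,\lambda^\DDS_1>0$ by taking $\K\abs{\cos(\af)}$ somewhat larger than $\abs{\cos(\at)}+2\abs{r\cos(2\at)}$, while~\eqref{eq:CondSaddleValues} is secured by keeping $\abs{r\cos(2\at)}$ comparable with $\abs{\cos(\at)}$, and~\eqref{eq:CondWeakChimeras} is an independent constraint $\abs{\sin(\at)}>\K$ that is compatible with small $\abs{\cos(\at)}$.

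The main obstacle is the bookkeeping in the robustness step: each of the six edges must be located inside the fixed-point subspace of an explicit isotropy subgroup, and the target equilibrium verified to be a sink there using the linearization formulas. Once the two $\Zm$-representative edges are handled as above, the cyclic action transports the argument to the remaining four with no further computation, and the theorem assembles immediately from the preceding three lemmas.
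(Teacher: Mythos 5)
Your proposal follows essentially the same route as the paper: the theorem is assembled from Lemmas~\ref{lem:WeakChimeras}--\ref{lem:HetCycleN2} and the dissipativity lemma, robustness comes from the fact that each connection is a source--sink connection inside a symmetry-forced invariant subspace, and the only remaining content is that the parameter region cut out by \eqref{eq:CondWeakChimeras}, \eqref{eq:CondSaddlesDiss}, \eqref{eq:CondSaddleValues} is nonempty. The paper settles that last point by exhibiting an explicit witness, $(\at,\af)=(\frac{\pi}{2},\pi)$, for which the eigenvalues collapse to $-4r+2\K$, $-4r$, $-4r-2\K$ and all conditions reduce to $0<\K<4r<2\K<2$; your version argues nonemptiness qualitatively instead. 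One claim in your sign analysis is wrong: \eqref{eq:CondSaddlesDiss} does \emph{not} force $\cos(\at)<0$ --- it only forces $r\cos(2\at)<0$ and $\cos(\af)<0$, together with $2\abs{\cos(\at)}<4\abs{r\cos(2\at)}$, and indeed the paper's own witness has $\cos(\at)=0$. This slip does not break your existence argument (your inequalities only use $\abs{\cos(\at)}$), but as written the nonemptiness step is looser than the paper's and would benefit from simply exhibiting the explicit parameter values.
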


\begin{proof}
Note that if \eqref{eq:CondSaddles}---or~\eqref{eq:CondSaddlesDiss}---holds, the heteroclinic trajectories in Lemma~\ref{lem:HetCycleN2} are source-sink connections in an invariant subspace forced by symmetry. Hence, to prove the assertion it suffices to show that there are indeed parameter values such that~\eqref{eq:CondWeakChimeras}, 
\eqref{eq:CondSaddlesDiss}, and
\eqref{eq:CondSaddleValues} are satisfied simultaneously.

Let $(\at, \af) = (\frac{\pi}{2}, \pi)$ and $r>0$. Set 
$\lambda^u:=\lambda^\DSS_2=\lambda^\DDS_1$, 
$\lambda^{>}:=\lambda^\DSS_1=\lambda^\DDS_3$, 
$\lambda^{\gg}:=\lambda^\DSS_3=\lambda^\DDS_2$. The stability conditions~\eqref{eq:CondSaddlesDiss} thus reduce to
\begin{subequations}
\begin{align}\label{eq:N2Stab}
\lambda^u &= -4r+2\K>0\\
\lambda^{>} &= -4r < 0\\
\lambda^{\gg} &= -4r-2\K <0
\end{align}
\end{subequations}
which are satisfied for
\begin{equation}
0<r<\frac{1}{2}\K.
\end{equation} 
Then $\lambda^u > 0 > \lambda^{>} > \lambda^{\gg}$ and the saddle values $\nu=\nu^\DSS=\nu^\DDS$ of the equilibria evaluate to
\[
\nu=-\frac{\lambda^{>}}{\lambda^u}=\frac{2r}{\K-2r}.
\]
and thus~\eqref{eq:CondSaddleValues}, that is, $\nu>1$, holds for
\begin{equation}
\frac{1}{4}\K<r
\end{equation}
Finally, for $\at=\frac{\pi}{2}$ Condition~\eqref{eq:CondWeakChimeras} is equivalent to $\K < 1$.

In summary, for $(\at, \af) = (\frac{\pi}{2}, \pi)$ the conditions~\eqref{eq:CondWeakChimeras}, 
\eqref{eq:CondSaddlesDiss}, and
\eqref{eq:CondSaddleValues}, hold simultaneously if
\begin{equation}
0< \K < 4r < 2\K < 2
\end{equation}
which completes the proof.
\end{proof}

\newcommand{\figypanell}[1]{\raisebox{3.7cm}{\textbf{(#1)}}\hspace{-16pt}}

\begin{figure}
\centerline{
\figypanell{a}
\includegraphics[scale=\imagescaling]{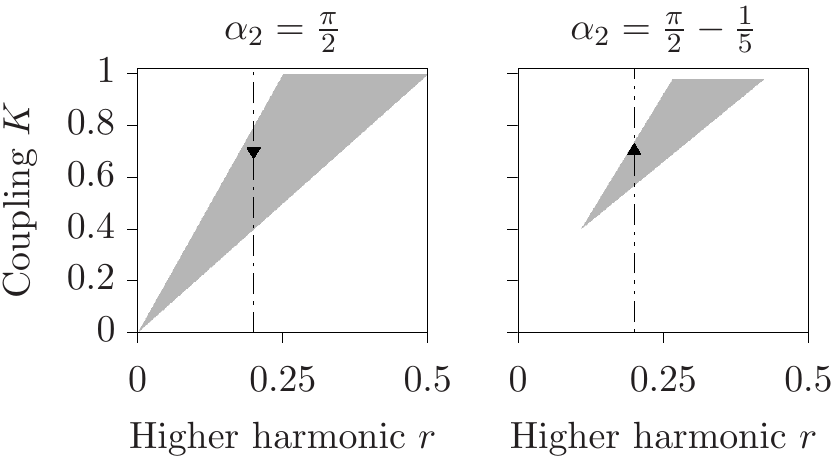}
\hfill
\figypanell{b}
\includegraphics[scale=\imagescaling]{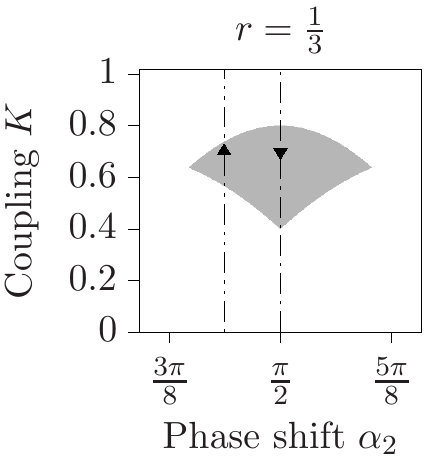}
}
\caption{\label{fig:ParamSpace}
The parameter values where Conditions \eqref{eq:CondWeakChimeras}, 
\eqref{eq:CondSaddlesDiss}, and
\eqref{eq:CondSaddleValues}
hold are contained in the shaded area; here $\af=\pi$ is fixed. Identical vertical lines indicate the same parameter values across Panels~(a) and~(b). The points $(\at, r, \K)=(\frac{\pi}{2}, \frac{1}{2}, \frac{7}{10})$ and $(\at, r, \K)=(\frac{\pi}{2}-\frac{1}{5}, \frac{1}{2}, \frac{7}{10})$ are plotted in all panels.
}
\end{figure}

Again, conditions~\eqref{eq:CondWeakChimeras}, 
\eqref{eq:CondSaddlesDiss}, and
\eqref{eq:CondSaddleValues}
for the existence of a dissipative robust heteroclinic cycle are sufficient. Figure~\ref{fig:ParamSpace} illustrates the region in parameter space that is given by these conditions.

\begin{rem}\label{rem:N2Closed}
For $\Cyc_2$ set $\xi_1 = \DSS, \xi_2=\DDS, \dotsc, \xi_6=\DSD$. We have $\Wu(\xi_q)\sm\sset{\xi_q}\subset \Ws(\xi_{q+1})$ and thus the heteroclinic chain associated with~$\Cyc_2$ is closed.
\end{rem}

\section{Heteroclinic Cycles for Three Oscillators per Population}
\label{sec:HetCycles3x3}

We now consider phase oscillator networks~\eqref{eq:DynMxN} with more than two oscillators per population. Throughout this section assume that $\maxpop=3$ and $\maxdim=3$ and suppose that the phase interaction given by the coupling functions
\begin{subequations}\label{eq:CplngFuncN3}
\begin{align}
g_2(\phi) &= \sin(\phi+\at)-r(a_2\sin(2(\phi+\at))+\sin(6(\phi+\at))),\\
g_4(\phi) &= \sin(\phi+\af).
\end{align}
\end{subequations}
If the populations are uncoupled, $\K=0$, then for $\alpha=\at=\af-\frac{\pi}{2}=\frac{\pi}{2}$ and $r>0$ there is bistability between~$\Sync$ and~$\Splay$ in each population~\cite{Ashwin1992}.

In this section, we show that there are dissipative robust heteroclinic cycles for networks of $\maxpop=3$ populations of $\maxdim=3$ oscillators with coupling~\eqref{eq:CplngFuncN3}. Indeed, we proceed as before and derive conditions for which there heteroclinic source-sink connections on invariant subspaces forced by symmetry. The resulting heteroclinic network will be robust. Thus, for the remainder of the section we set $(\at,\af)=(\frac{\pi}{2},\pi)$ rather writing down the conditions in full generality.

A stability analysis of the phase configurations~$\SSS$ and~$\DDD$ can be done as in the previous section. Moreover, we restrict ourselves to~$\DSS$ and~$\DDS$ because of symmetry.

\subsection{Local dynamics.}

We first establish conditions for~$\DSS$, $\DDS$ to be invariant sets with localized frequency synchrony which are suitable saddles in the reduced system.

\begin{lem}\label{lem:N3FreqSync}
The sets $\DSS$, $\DDS$ and their images under the~$\Zm$ symmetry have localized frequency synchrony as subsets of~$\Tormn$ if
\begin{equation}\label{eq:CondWeakChimerasN3}\tag{C$\Omega$N3}
4\K<9.
\end{equation}
\end{lem}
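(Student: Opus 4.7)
The plan is to follow the template of Lemma~\ref{lem:WeakChimeras}, adapting the constants to the case $N=3$. By Lemma~\ref{lem:FreqSync}, within-population equality $\Omega_{\sigma,k}=\Omega_\sigma$ reduces the task to separating the synchronous-population frequency $\Omega_\Sp$ from the splay-population frequency $\Omega_\Dp$.

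First, I would compute the uncoupled ($K=0$) baseline frequencies at $(\at,\af)=(\pi/2,\pi)$. A synchronous population gives $\Omega_\Sp^{(0)}=\omega+(N-1)g_2(0)=\omega+2$, since $g_2(0)=\sin(\pi/2)=1$ and the $\sin(2(\phi+\at))$, $\sin(6(\phi+\at))$ pieces of \eqref{eq:CplngFuncN3} vanish at $\phi=0$, $\at=\pi/2$. A splay population gives $\Omega_\Dp^{(0)}=\omega+g_2(2\pi/3)+g_2(4\pi/3)$; the principal harmonic contributes $\sin(\at+2\pi/3)+\sin(\at+4\pi/3)=-\sin\at=-1$, and the higher harmonics sum to zero at these splay arguments. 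Hence $|\Omega_\Sp^{(0)}-\Omega_\Dp^{(0)}|=3$.

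Second, I would bound the coupling-induced correction. The coupling term in $\dot\theta_{\sigma,k}$ is $\sum_{j\ne k}[-K G_4(\theta_{\sigma-1};\phi_{jk})+K G_4(\theta_{\sigma+1};\phi_{jk})]$, controlled via $|g_4|\le 1$ (hence $|G_4|\le 1$). Combined with the triangle inequality $|\Omega_\Sp-\Omega_\Dp|\ge|\Omega_\Sp^{(0)}-\Omega_\Dp^{(0)}|-|\Omega_\Sp-\Omega_\Sp^{(0)}|-|\Omega_\Dp-\Omega_\Dp^{(0)}|$, this yields a lower bound of the form $3-cK$, and requiring $3-cK>0$ produces the stated condition $4K<9$.

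The main obstacle is obtaining the correct constant $c$ in the coupling estimate. A naive use of $|G_4|\le 1$ gives $c=8$ and the overly strict condition $K<3/8$; to recover $4K<9$ one should exploit the explicit identity $G_4(\theta_\tau;\phi)=R(\theta_\tau)^2\sin(\phi+\af)$ for sinusoidal $g_4$, where $R$ is the Kuramoto-type modulus of $\theta_\tau$. On $\DSS$ and $\DDS$ the splay-source contributions vanish entirely (since $R=0$ on $\Splay$), and the two sync-source contributions to the splay target cancel pairwise; this asymmetry between synchronous and splay sources tightens $c$ to the value that produces the stated $4K<9$.
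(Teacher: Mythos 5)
Your proposal takes essentially the same route as the paper's proof: Lemma~\ref{lem:FreqSync} for within-population frequency equality, the uncoupled frequencies $\omega+2$ (sync) and $\omega-1$ (splay) giving a gap of $3$, and a bound of the form $3-c\K$ on the coupled frequency difference---and the paper is just as terse as you are about the constant, simply asserting $3-\frac{4}{3}\K\leq\abs{\Omega_{1}-\Omega_{2}}$ without deriving $c=\frac43$. Your worry about $c$ is in fact settled by your own refinement: with $G_4(\theta_\tau;\phi)=R_\tau^2\sin(\phi+\af)$ and $\af=\pi$, the coupling contribution to $\Omega_\sigma$ vanishes identically on $\DSS$ and $\DDS$ (splay sources have $R_\tau=0$, two sync sources of a splay population cancel exactly, and the remaining factor $\sum_{j\neq k}\sin(\theta_{\sigma,j}-\theta_{\sigma,k}+\pi)$ is zero on both $\Sp$ and $\Dp$), so the gap is exactly $3$ and any $c\leq\frac{4}{3}$ suffices for the stated condition.
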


\begin{proof}
The proof is essentially the same as for Lemma~\ref{lem:WeakChimeras}. Frequency synchrony with populations is given by Lemma~\ref{lem:FreqSync}.

For $\K=0$ we have
$\Omega_{1}(\theta^0) = \omega+2\gt(0)=\omega+2$ for $\theta^0\in\Spp$ and 
$\Omega_{2}(\theta^0) = \omega+\gt(2\pi/3)+\gt(4\pi/3)=\omega-1$ for $\theta^0\in \pDp$.
This implies that for $\K\geq 0$ and coupling~\eqref{eq:CplngFuncN3} we have
$3-\frac{4}{3}\K \leq \abs{\Omega_{1}(\theta^0)-\Omega_{2}(\theta^0)}$
Thus $\DSS$, $\DDS$ have localized frequency synchrony on~$\Tormn$ if~\eqref{eq:CondWeakChimerasN3} is satisfied.
\end{proof}

\begin{lem}\label{lem:N3LocalStab}
In the reduced system, the equilibria~$\DSS$, $\DDS$ are hyperbolic saddles with two-dimensional unstable manifold if
\begin{equation}\label{eq:StabilityN3}\tag{C$\lambda$N3}
0<10r<\K.
\end{equation}
\end{lem}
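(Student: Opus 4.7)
The plan mirrors the $\maxdim=2$ argument: exploit the block decomposition of the linearization forced by the isotropies at $\DSS$ and $\DDS$, compute eigenvalues block by block, and verify signs. The reduced phase space is $\maxpop(\maxdim-1)=6$ dimensional, so there are six eigenvalues to account for; the hypothesis $0<10r<\K$ should deliver exactly two with positive real part (yielding a two-dimensional unstable manifold) and four with strictly negative real part, none vanishing. At both equilibria the isotropy under $\Sn^{\maxpop}$ is a product of $S_3$-factors acting by permutation on each synchronized population together with $\mathbb{Z}_3$-factors (cyclic rotation) acting on each splay population. Each of these factors is irreducible on the associated $2$-dimensional phase-difference space, so the linearization is block diagonal with three $2\times 2$ blocks, one per population: a sync block yields a double real eigenvalue, a splay block a complex-conjugate pair.

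Next I would compute the effective coupling seen by each population. With $\af=\pi$ one has $g_4(\phi)=-\sin(\phi)$, and a direct summation gives $G_4(\theta_\tau;\phi)=-\sin(\phi)$ whenever $\theta_\tau\in\Sync$ and $G_4(\theta_\tau;\phi)=0$ whenever $\theta_\tau\in\Splay$, the latter using $\sum_{k=0}^{2}\sin(\phi+2\pi k/3)=0$. Whenever both neighbors of a population are of the same type, the contributions $\pm\K G_4$ cancel and that population effectively decouples from the rest. At~$\DSS$ this occurs for the splay population~$1$, while the synchronized populations~$2$ and~$3$ see effective couplings $g_2(\phi)\mp\K\sin(\phi)$; at~$\DDS$ the $\Zm$-symmetry exchanges roles, so the synchronized population~$3$ decouples and the splay populations~$1$,~$2$ see $g_2(\phi)\pm\K\sin(\phi)$.

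For a three-oscillator population with coupling $h$, the standard Kuramoto-type linearization in phase-difference coordinates yields a double eigenvalue $-3h'(0)$ at full synchrony, and at splay a complex-conjugate pair whose real part is $-\tfrac{3}{2}\bigl(h'(2\pi/3)+h'(-2\pi/3)\bigr)$ (the trivial circulant eigenvalue $\lambda_0=0$ is absorbed by the quotient under phase shift). Substituting~\eqref{eq:CplngFuncN3} at $(\at,\af)=(\pi/2,\pi)$, I would use $\cos(\pm 4\pi)=1$ to collapse the six-harmonic term and note that the $\sin(\pm 2\pi/3)$ terms cancel upon summation. All six eigenvalues then reduce to closed form: at~$\DSS$ the loaded sync blocks evaluate to $3(\K-10r)$ and $-3(\K+10r)$ and the decoupled splay block to a complex pair with real part $-12r$; at~$\DDS$ the decoupled sync block gives $-30r$, and the loaded splay blocks give real parts $\tfrac{3}{2}(\K-8r)$ and $-\tfrac{3}{2}(\K+8r)$.

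Under $0<10r<\K$ the signature at~$\DSS$ is one positive pair (the $+\K$-loaded sync block) and two negative pairs, with no eigenvalue vanishing; at~$\DDS$ the signature is the same since $\K>10r>8r$ makes the $+\K$-loaded splay block unstable while the other pairs remain stable. Both equilibria are therefore hyperbolic saddles with two-dimensional unstable manifolds, as claimed. The main obstacle is pure bookkeeping: one must carefully track which neighbor is sync versus splay in the $\pm\K G_4$ terms at $\DSS$ and $\DDS$, propagate signs correctly through the three harmonics of $g_2$, and confirm that both thresholds ($\K>10r$ at $\DSS$ and $\K>8r$ at $\DDS$) follow from the single hypothesis $10r<\K$. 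The underlying dynamical content is otherwise standard: a symmetry-forced block diagonalization followed by a textbook circulant/synchrony linearization.
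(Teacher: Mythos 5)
Your method is the same as the paper's: the isotropy of $\DSS$ and $\DDS$ forces the linearization of the reduced ($6$-dimensional) system to be block diagonal with one $2\times2$ block per population; each block is a standard single-population synchrony or splay linearization for an effective coupling obtained by evaluating $G_4$ at a synchronized neighbour (giving $-\sin$) or a splay neighbour (giving $0$); and the condition on $(\K,r)$ is read off from the signs of the six eigenvalues. Your structural bookkeeping is correct throughout: which population decouples and which sees $\gt\pm\K\sin$ at each equilibrium, the double real eigenvalue at synchrony versus the complex pair at splay, and the count of two unstable against four stable directions. The paper's proof simply states the eigenvalues \eqref{eq:StabDSSN3}--\eqref{eq:StabDDSN3}; you supply the derivation.

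However, the six eigenvalues you report do not match the paper's. The paper has $-24r\pm3\K$ for the loaded sync blocks at $\DSS$, $-15r\pm\tfrac{3}{2}i$ for the decoupled splay block, $-15r\pm\tfrac{3}{2}\K$ for the real parts of the loaded splay blocks at $\DDS$, and $-24r$ for the decoupled sync block, whereas you obtain $-30r\pm3\K$, $-12r$, $\pm\tfrac{3}{2}\K-12r$, and $-30r$. The discrepancy is exactly the value of the second-harmonic coefficient $a_2$ in \eqref{eq:CplngFuncN3}, which the paper never fixes: since $\gt'(0)=(2a_2+6)r$ and $\gt'(2\pi/3)+\gt'(4\pi/3)=(12-2a_2)r$ at $\at=\tfrac{\pi}{2}$, your numbers correspond to $a_2=2$ while the paper's require $a_2=1$ (consistently across all four independent blocks). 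It is a coincidence that both choices yield the same final threshold $10r<\K$ — with $a_2=1$ the binding constraint is $\Re\lambda_1^\DDS>0$, with $a_2=2$ it is $\lambda_2^\DSS>0$ — so your conclusion for this lemma is unaffected. But the choice is not innocuous downstream: the saddle-value computation in Lemma~\ref{lem:N3Dissipative} uses the quotients $15r/(3\K-24r)$ and $24r/(\tfrac{3}{2}\K-15r)$, so condition \eqref{eq:SaddleValN3} would change with your eigenvalues. You should state explicitly which $a_2$ you use and, to be consistent with the rest of the paper, redo the arithmetic with $a_2=1$.
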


\begin{proof}
The linearization at~$\DSS$ yields eigenvalues
\begin{subequations}\label{eq:StabDSSN3}
\begin{align}
\lambda_1^\DSS &= -15r\pm \frac{3}{2}i,\\
\lambda_2^\DSS &= -24r+3\K,\\
\lambda_3^\DSS &= -24r-3\K
\end{align}
\end{subequations}
which correspond to the stability of the phase configuration of the first, second, and third population, respectively.

Similarly, for the linearization of the vector field at~$\DDS$ we have
\begin{subequations}\label{eq:StabDDSN3}
\begin{align}
\lambda_1^\DDS &= -15r+\frac{3}{2}\K\pm \frac{3}{2}i,\\
\lambda_2^\DDS &= -15r-\frac{3}{2}\K\pm \frac{3}{2}i,\\
\lambda_3^\DDS &= -24r
\end{align}
\end{subequations}
which govern the linear stability of the phase configuration of the first, second, and third population, respectively.

Thus, we have hyperbolic saddles with two-dimensional unstable manifold if $0<r$ and
\[\min\sset{\Re(\lambda_1^\DDS), \Re(\lambda_2^\DSS)} = -15r+\frac{3}{2}\K > 0,\]
which is equivalent to~\eqref{eq:StabilityN3}, as asserted.
\end{proof}

Note that expansion at~$\DSS$ is determined by the double real eigenvalue~$\lambda_2^\DSS$ forced by symmetry. The eigenvalues of the linearization~\eqref{eq:StabDSSN3} and~\eqref{eq:StabDDSN3} also give insight into the local bifurcations as parameters are varied. For example, $\DDS$ undergoes a Hopf bifurcation as the parameter~$r$ goes through zero.

\newcommand{\CIR}{\mathcal{C}}
\newcommand{\dCIR}{\partial\CIR}
\newcommand{\CIRcl}{\overline{\CIR}}

\subsection{Global dynamics}
In the previous section we established the existence of suitable saddle invariant sets. To obtain a heteroclinic cycle, these invariant sets have to be joined by a heteroclinic trajectory.

Both~$\DSS$ and~$\DDS$ lie in the two-dimensional invariant subspace~$\DpS$ and~$\DDS$ while~$\SDS$ lie in the two-dimensional invariant subspace~$\pDS$. Because of the permutational symmetry within populations, the dynamics on both~$\DpS$ and~$\pDS$ are~$\Sk{3}$ equivariant. As discussed in the context of Lemma~\ref{lem:FreqSync}, this implies that the phase ordering within each population is preserved. Hence it suffices to consider the dynamics on (the closure of) the invariant simplex, commonly referred to as the canonical invariant region,
\[\CIR :=\set{\psi=(\psi_1, \psi_2)\in\Tor^2}{0<\psi_1<\psi_2<2\pi}\]
for the phase differences; cf.~\cite{Ashwin1992, Ashwin2016}. The phase configuration $\Sync = (0, 0)$, where all oscillators are phase synchronized, lies on the boundary~$\dCIR$ of~$\CIR$ and the splay phase configuration $\Dp = \big(\frac{2\pi}{3}, \frac{4\pi}{3}\big)$ is its centroid as illustrated in Figure~\ref{fig:DpS}(a). For a function $F:\CIR\to\R$ let~$\norm{F}_\CIR$ denote the $\sup$ norm on~$\CIR$.

The vector field for the dynamics of~\eqref{eq:DynMxN} with coupling~\eqref{eq:CplngFuncN3} and $\at=\frac{\pi}{2}$, $\af=\pi$ can now be evaluated explicitly. Write
{\allowdisplaybreaks
\begin{subequations}
\begin{align}
X_0(\psi) &= \left(\begin{array}{l}\cos(\psi_1-\psi_2)-\cos(\psi_2)\\ 
\cos(\psi_2-\psi_1)-\cos(\psi_1)\end{array}\right),\\
X_K(\psi) &= \left(\begin{array}{l}\sin(\psi_1-\psi_2)+\sin(\psi_2)+2\sin(\psi_1)\\
\sin(\psi_2-\psi_1)+2\sin(\psi_2)+\sin(\psi_1)\end{array}\right),\\
X_r(\psi) &= \left(\begin{array}{l}
\sin(6(\psi_2-\psi_1))+\sin(2(\psi_2-\psi_1))-\sin(6\psi_2)\\
\phantom{abcd}-\sin(2\psi_2)-2\sin(6\psi_1)-2\sin(2\psi_1)\\ \sin(6(\psi_1-\psi_2))+\sin(2(\psi_1-\psi_2))-2\sin(6\psi_2)\\
\phantom{abcd}-2\sin(2\psi_2)-\sin(6\psi_1)-\sin(2\psi_1)\end{array}\right).
\end{align}
\end{subequations}
}%
The dynamics on~$\DpS$ are given by
\begin{equation}\label{eq:DynN3DpS}
\dot\psi = X_0(\psi) + r X_r(\psi) + \K X_\K(\psi)
\end{equation}
and the dynamics on~$\pDS$ are given by
\begin{equation}\label{eq:DynN3pDS}
\dot\psi = X_0(\psi) + r X_r(\psi) - \K X_\K(\psi).
\end{equation}
The first term describes interaction within each population given by the first harmonic only, the second term is the intra-population interaction through higher harmonics, and the third term are interactions arising through the coupling between populations.

\begin{figure}
\centerline{
\includegraphics[width=0.28\linewidth]{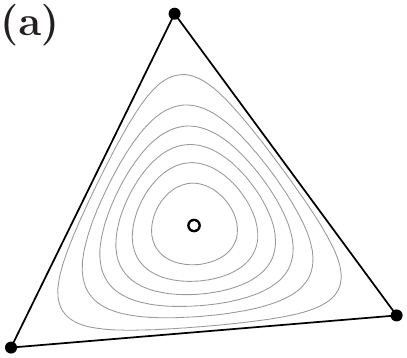}
\hfill
\includegraphics[width=0.28\linewidth]{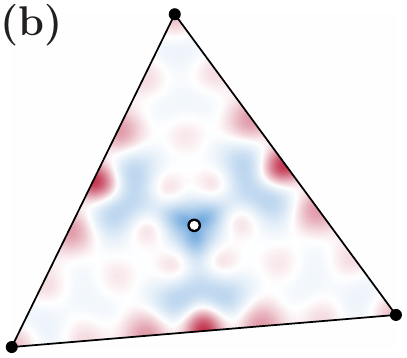}
\hfill
\includegraphics[width=0.28\linewidth]{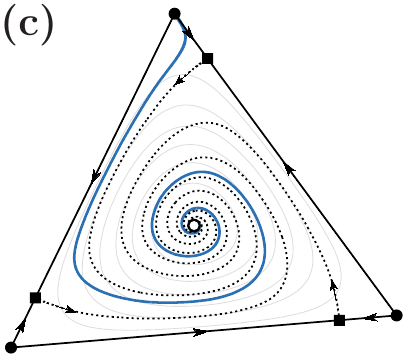}
}
\caption{\label{fig:DpS}
The dynamics on the invariant subspaces $\DpS$ and $\pDS$ are determined by the dynamics on~$\CIR$. The boundary~$\dCIR$ is composed of $\psi_1=0$, $\psi_2=0$, and $\psi_1=\psi_2$ (black lines) which intersect in~$\Sync$ (solid dot). The splay configuration~$\Splay$ (hollow dot) is the centroid of~$\CIR$.
Panel~(a) shows equipotential lines of~$V$, given by~\eqref{eq:Potential}, that are dynamically invariant if $r=\K=0$. 
Panel~(b) shows~$R(\psi)$ (red positive, blue negative) and we have $\|R(\psi)\|_\CIR<15$.
Panel~(c) shows the dynamics on $\DpS$ for $r=0.01$, $K=0.16$. There are additional equilibrium points (squares) on~$\dCIR$ whose unstable manifolds (dotted lines) lie in the stable manifold of~$\Splay$. A heteroclinic trajectory between~$\Sync$ and~$\Splay$ is depicted in blue.
}
\end{figure}

\begin{lem}\label{lem:N3SaddleConnection}
Suppose that the stability condition~\eqref{eq:StabilityN3} holds. Then there are heteroclinic trajectories $\HC{\DSS}{\DDS}$ and $\HC{\DDS}{\SDS}$ if
\begin{equation}\label{eq:HetOrbitsN3}\tag{C$\psi$N3}
15r<K.
\end{equation}
\end{lem}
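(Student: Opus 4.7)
The plan is to reduce both assertions to a single monotonicity argument on the canonical invariant region $\CIR$. By the $\Sk{3}$-equivariance of the dynamics on $\DpS$ and $\pDS$, it suffices to work on $\CIR$; there, $\DSS$ and $\SDS$ sit at $\Sync$ (on $\dCIR$) whereas $\DDS$ sits at $\Splay$ (the interior centroid). Under condition~\eqref{eq:StabilityN3}, Lemma~\ref{lem:N3LocalStab} already tells me that on~$\DpS$ the point $\Sync$ is a source (double eigenvalue $\lambda_2^\DSS = -24r+3\K>0$) while $\Splay$ is a stable focus (real part $-15r-\tfrac{3}{2}\K<0$); the analogous statement holds on $\pDS$ after relabelling by the $\Zm$ cyclic symmetry. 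I therefore only need to show that some branch of $\Wu(\Sync)$ entering $\CIR$ is forward asymptotic to $\Splay$.

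The core step exploits the first integral $V$ of~\eqref{eq:Potential} for the unperturbed vector field $X_0$, whose equipotentials (Figure~\ref{fig:DpS}(a)) foliate the interior of $\CIR$ by closed loops nested around $\Splay$ and collapsing to $\dCIR$. Differentiating $V$ along the full flow~\eqref{eq:DynN3DpS} gives
\[\dot V \;=\; \K\,\langle \nabla V,X_\K\rangle \;+\; r\,\langle \nabla V,X_r\rangle.\]
I would verify by direct trigonometric computation that $\langle\nabla V,X_\K\rangle$ is strictly positive on the interior of $\CIR$ (vanishing only at $\Splay$ and at isolated boundary points), and that the scalar function $R(\psi) := \langle \nabla V,X_r\rangle / \langle \nabla V,X_\K\rangle$ extends continuously to $\CIR$ with $\|R\|_\CIR<15$, as displayed in Figure~\ref{fig:DpS}(b). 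Factoring out the positive quantity I then obtain
\[\dot V \;=\; (\K + r\,R(\psi))\,\langle \nabla V,X_\K\rangle \;\geq\; (\K-15r)\,\langle \nabla V,X_\K\rangle \;>\;0\]
throughout the interior of $\CIR$ whenever~\eqref{eq:HetOrbitsN3} holds.

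Strict monotonicity of $V$ rules out periodic and homoclinic orbits in the interior of $\CIR$, so by the Poincar\'e--Bendixson theorem on this 2D region every branch of $\Wu(\Sync)$ that enters $\CIR$ must be forward asymptotic to an equilibrium. The only interior equilibrium is $\Splay$, and as illustrated in Figure~\ref{fig:DpS}(c) the extra equilibria created by perturbation all sit on $\dCIR$ with unstable manifolds contained in $\Ws(\Splay)$. Since $V$ is strictly increasing along the orbit and attains its interior maximum at $\Splay$, the unstable manifold of $\Sync$ must accumulate at $\Splay$, giving $\HC{\DSS}{\DDS}$. For $\pDS$ the sign change $\K\mapsto -\K$ in~\eqref{eq:DynN3pDS}, together with replacing $V$ by $-V$ and using the $\Zm$ cyclic action, reproduces the same argument and yields $\HC{\DDS}{\SDS}$.

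The principal technical obstacle is establishing the two pointwise estimates on $\CIR$: the strict positivity of $\langle \nabla V,X_\K\rangle$ in the interior and the uniform bound $\|R\|_\CIR<15$. Both require unpacking the closed-form trigonometric expressions for $V$, $X_\K$, $X_r$, and demand care near $\dCIR$, where numerator and denominator of $R$ may vanish simultaneously and one must verify that the quotient remains bounded (and hence that $\dot V>0$ propagates into the interior by continuity).
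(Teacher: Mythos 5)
Your proposal follows essentially the same route as the paper's proof: using the first integral $V$ of the uncoupled, first-harmonic dynamics as a Lyapunov-like function on $\CIR$, verifying $\langle\grad V,X_\K\rangle>0$ away from $\Splay$ and $\dCIR$, bounding the ratio $R$ by $15$ to propagate strict monotonicity of $V$ for $15r<\K$, and concluding source--sink connections on $\DpS$ and (by the sign flip of $\K$) on $\pDS$. The only cosmetic differences are your sign convention for $R$ and your explicit appeal to Poincar\'e--Bendixson, where the paper argues directly from monotone convergence of $V$ to its unique interior maximum.
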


\begin{proof}
First note that the dynamics of the uncoupled system, $\K=0$, without higher harmonics, $r=0$, is integrable (on both~$\DpS$ and~$\pDS$) since the quantity
\begin{equation}\label{eq:Potential}
V(\psi_1, \psi_2) = \sin\left(\frac{\psi_1}{2}\right)\sin\left(\frac{\psi_2}{2}\right)\sin\left(\frac{\psi_2-\psi_1}{2}\right)
\end{equation}
is preserved; see also~\cite{Ashwin2016}. (Some level sets of~$V$ are depicted in Figure~\ref{fig:DpS}(a).) That is, we have
\[\dot V := \langle \grad V, \dot\psi\rangle = \langle \grad V, X_0\rangle = 0.\] 
Note that~$V>0$ on~$\CIR$, it vanishes on its boundary~$\dCIR$, and takes a unique maximum~$V(\Splay)$ at~$\Splay$.

We will first consider the dynamics on~$\DpS$ and show that there is a source-sink heteroclinic trajectory $\HC{\DSS}{\DDS}$. By assumption~\eqref{eq:StabilityN3}, we have that~$\DSS$ is a source in~$\DpS$ with $\Wu(\DSS)\subset\DpS$. In the following, we derive conditions on~$\K$ and~$r$ such that~$V$ is a (Lyapunov-like) potential function on~$\CIR$ which guarantee that~$V$ is strictly increasing along trajectories in~$\CIR$. Thus, any trajectory in~$\CIR$ converges to~$\Splay\in\CIR$ which yields a heteroclinic trajectory $\HC{\DSS}{\DDS}$.

Now consider nontrivial coupling between populations, $\K>0$, while higher harmonics are absent, $r=0$. Using trigonometric identities we have
\begin{equation}
\dot V = \K\langle\grad V, X_\K\rangle = \K(V(\psi) + V(2\psi)).
\end{equation}
The potential~$V$ increases along trajectories if $\dot V>0$ on $\CIR\sm\Splay$. Rearranging terms, $\dot V>0$ is equivalent to
\begin{equation}
1 >\frac{V(2\psi)}{V(\psi)} = -2\cos(\psi_2-\psi_1)-2\cos(\psi_2)-2\cos(\psi_1)-2 =: Q(\psi)
\end{equation}
for $\psi\in\CIR\sm\Splay$. The function~$Q$ has a unique maximum on~$\CIR$ at $\psi=\Splay$ with $Q(\Splay) = 1$. Thus, $\dot V > 0$, and trajectories in $\CIR$ approach~$\Dp$ asymptotically.

We now show that this property persists for $r > 0$ sufficiently small. We have
\[\dot V = \K \langle\grad V, X_K\rangle + r \langle\grad V, X_r\rangle\]
and by the calculations above, we know that $\langle\grad V, X_K\rangle$ only vanishes for $\psi\in\partial\CIR\cup\Dp$. The condition $\dot V > 0$ on $\CIR\sm\Dp$ is equivalent to
\begin{equation}
\frac{\K}{r} > -\frac{\langle\grad V, X_r\rangle}{\langle\grad V, X_K\rangle}=:R(\psi).
\end{equation}
Note that any singularity of~$R$ is removable since $\langle\grad V, X_r\rangle$ also vanishes on $\CIR\sm\Dp$ at the same order. Hence, $R(\psi)$ is a bounded function on $\CIRcl$ and evaluating minima and maxima on~$\CIRcl$ yields $\norm{R}_\CIR<15$; cf.~Figure~\ref{fig:DpS}(b). This implies that $\dot V > 0$ on $\CIR\sm\Splay$ if
\[15r < \K.\]
which yields a heteroclinic connection between the source~$\Sync$ and the sink~$\Splay$ on~$\DpS$.

The proof that there is a robust heteroclinic trajectory~$\HC{\DDS}{\SDS}$ in~$\pDS$ is analogous. We have $\Wu(\DDS)\subset\pDS$ and a heteroclinic trajectory $\HC{\DDS}{\SDS}$ is a trajectory connecting the source~$\Splay$ and sink~$\Sync$ on~$\pDS$. Thus, it suffices to show that $\dot V < 0$ on $\CIR\sm\Splay$. Note that the vector fields~\eqref{eq:DynN3DpS} and~\eqref{eq:DynN3pDS} only differ by the sign of~$\K$. We proceed as above and in the last step the condition $\dot V < 0$ is equivalent to
\[
\frac{\K}{r} > -R(\psi).
\]
Again, $\norm{R}_\CIR<15$ implies that if
\[15r < \K\]
we have a robust heteroclinic connection between the source~$\Splay$ and the sink~$\Sync$ on~$\pDS$.
\end{proof}

In fact, Lemma~\ref{lem:N3SaddleConnection} implies the existence of a heteroclinic cycle
\begin{equation}
\Cyc_3 = (\DSS, \DDS, \SDS, \SDD, \SSD, \DSD, \DSS).
\end{equation}

\begin{lem}\label{lem:N3Dissipative}
Suppose that the assumptions of Lemma~\ref{lem:N3SaddleConnection} hold. The heteroclinic cycle~$\Cyc_3$ is dissipative if
\begin{equation}\label{eq:SaddleValN3}\tag{C$\nu$N3}
\K<18r.
\end{equation}
\end{lem}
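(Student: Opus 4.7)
The plan is to compute the saddle value $\nu(\Cyc_3)=\prod_q \nu_q$ directly from the eigenvalues already listed in \eqref{eq:StabDSSN3} and \eqref{eq:StabDDSN3}, and to verify that the inequality $\nu(\Cyc_3)>1$ reduces algebraically to $\K<18r$. Because $\Cyc_3=(\DSS,\DDS,\SDS,\SDD,\SSD,\DSD)$ consists of two $\Zm$--group orbits under the cyclic permutation of populations, the saddle values at symmetric equilibria agree, so that $\nu(\Cyc_3)=(\nu^\DSS\,\nu^\DDS)^3$; hence it suffices to show $\nu^\DSS\nu^\DDS>1$.

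First I would identify, at each of the two representative saddles, the eigenvalue of minimal (absolute) real part among the stable eigenvalues and the eigenvalue of maximal real part among the unstable eigenvalues. At $\DSS$ the stable eigenvalues (under \eqref{eq:StabilityN3}, which forces $\K>10r$) are the complex pair with real part $-15r$ and the double real eigenvalue $-24r-3\K$; since $-15r>-24r-3\K$, the minimally-contracting eigenvalue has real part $-15r$, while the only expanding direction is the double eigenvalue $-24r+3\K$. This gives
\[\nu^\DSS=\frac{15r}{3\K-24r}=\frac{5r}{\K-8r}.\]
At $\DDS$ the unstable complex pair has real part $-15r+\tfrac{3}{2}\K$, the other complex pair has real part $-15r-\tfrac{3}{2}\K$, and there is a double stable eigenvalue $-24r$. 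A quick check under \eqref{eq:StabilityN3} shows $-24r>-15r-\tfrac{3}{2}\K$ (equivalently $\K>6r$, which follows from $\K>10r$), so the minimally-contracting eigenvalue is the real one $-24r$. Thus
\[\nu^\DDS=\frac{24r}{\tfrac{3}{2}(\K-10r)}=\frac{16r}{\K-10r}.\]

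Next I would multiply the two saddle values and simplify:
\[\nu^\DSS\nu^\DDS=\frac{80r^2}{(\K-8r)(\K-10r)}=\frac{80r^2}{\K^2-18r\K+80r^2}.\]
The inequality $\nu^\DSS\nu^\DDS>1$ is then equivalent to $\K^2-18r\K<0$, i.e.\ $\K<18r$, which is precisely \eqref{eq:SaddleValN3}. Taking the cube (invoking the $\Zm$ symmetry to pair each of the three $\DSS$--type saddles with one of the three $\DDS$--type saddles in $\Cyc_3$) yields $\nu(\Cyc_3)>1$, as required.

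The only mildly subtle step is the ordering of real parts to identify the correct leading stable eigenvalue at each equilibrium; since the eigenvalues are written in closed form in \eqref{eq:StabDSSN3}--\eqref{eq:StabDDSN3} and the relevant orderings follow directly from the standing hypothesis $10r<\K$ of \eqref{eq:StabilityN3}, this poses no real obstacle and the computation essentially consists of substituting into the definition of~$\nu(\Cyc)$ given in Section~\ref{sec:Prelim}.
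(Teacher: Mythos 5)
Your proposal is correct and follows essentially the same route as the paper: identify the weakest contracting and strongest expanding eigenvalues at $\DSS$ and $\DDS$ from \eqref{eq:StabDSSN3}--\eqref{eq:StabDDSN3}, form $\nu^\DSS\nu^\DDS=\bigl(\tfrac{15r}{3\K-24r}\bigr)\bigl(\tfrac{24r}{\frac{3}{2}\K-15r}\bigr)$, and reduce $\nu^\DSS\nu^\DDS>1$ to $\K<18r$. Your explicit eigenvalue-ordering checks under $10r<\K$ and the remark that $\nu(\Cyc_3)=(\nu^\DSS\nu^\DDS)^3$ by the $\Zm$ symmetry simply spell out details the paper leaves implicit.
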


\begin{proof}
In order to get dissipativity of the cycle, we need to control the product of the saddle values. More precisely, the cycle is dissipative if
\[\nu^\DSS\nu^\DDS = \left(\frac{15r}{3\K-24r}\right)\left(\frac{24r}{\frac{3}{2}\K-15r}\right) > 1.\]
It is straightforward to verify that this condition is equivalent to~\eqref{eq:SaddleValN3}.
\end{proof}

Note that the conditions \eqref{eq:CondWeakChimerasN3}, \eqref{eq:StabilityN3}, \eqref{eq:HetOrbitsN3}, and \eqref{eq:SaddleValN3} as given in Lemmas~\ref{lem:N3FreqSync}--\ref{lem:N3Dissipative} are satisfied simultaneously for
\begin{equation}\label{eq:HetCycN3ParCond}
0<\frac{1}{18}\K<r<\frac{1}{15}\K<
\frac{3}{20}.
\end{equation}
Moreover, the heteroclinic trajectories are source-sink connections in invariant subspaces forced by symmetry. This proves the following result.

\begin{thm}\label{thm:HetCycleN3}
The network of $\maxpop=3$ populations of $\maxdim=3$ phase oscillators with dynamics~\eqref{eq:DynMxN} and coupling functions~\eqref{eq:CplngFuncN3} has a robust dissipative heteroclinic cycle~$\Cyc_3$ between dynamically invariant sets with localized frequency synchrony.
\end{thm}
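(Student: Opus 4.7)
The plan is to assemble Theorem~\ref{thm:HetCycleN3} directly from the four preceding lemmas. First I would observe that Lemma~\ref{lem:N3FreqSync} provides localized frequency synchrony on the relevant saddle sets under~\eqref{eq:CondWeakChimerasN3}; Lemma~\ref{lem:N3LocalStab} establishes hyperbolicity with two-dimensional unstable manifolds under~\eqref{eq:StabilityN3}; Lemma~\ref{lem:N3SaddleConnection} produces the two heteroclinic trajectories $\HC{\DSS}{\DDS}\subset\DpS$ and $\HC{\DDS}{\SDS}\subset\pDS$ under~\eqref{eq:HetOrbitsN3}; and Lemma~\ref{lem:N3Dissipative} delivers dissipativity under~\eqref{eq:SaddleValN3}. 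To close the cycle $\Cyc_3 = (\DSS, \DDS, \SDS, \SDD, \SSD, \DSD, \DSS)$, I would invoke the $\Zm=\Z_3$ cyclic symmetry that permutes the three populations: applying this symmetry to the two connections of Lemma~\ref{lem:N3SaddleConnection} generates the remaining four heteroclinic trajectories as images on the group orbit.

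Second, I would check that the four conditions can be met simultaneously by exhibiting the open parameter interval $0<\tfrac{1}{18}\K<r<\tfrac{1}{15}\K<\tfrac{3}{20}$. The verification is a short chain of implications: $r<\tfrac{1}{15}\K$ yields both $15r<\K$ (hence~\eqref{eq:HetOrbitsN3}) and $10r<\K$ (hence~\eqref{eq:StabilityN3}); $r>\tfrac{1}{18}\K$ is exactly~\eqref{eq:SaddleValN3}; finally $\K<15r<\tfrac{9}{4}$ gives $4\K<9$, which is~\eqref{eq:CondWeakChimerasN3}. Since the interval is non-empty, a genuine two-parameter family of $(r,\K)$ realizes the theorem.

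Third, for robustness I would argue that each heteroclinic trajectory of $\Cyc_3$ is a source-sink connection lying in one of the two-dimensional invariant subspaces $\DpS$ or $\pDS$ (or their $\Zm$-images), which are fixed-point subspaces of the $\Sk{3}$ permutational symmetry acting within each phase-synchronous population. By the standard persistence result for equivariant heteroclinic cycles recalled in Section~\ref{sec:Prelim}, such source-sink connections inside a symmetry-forced invariant subspace survive under $(\Sn\times\Tor)^\maxpop\rtimes\Zm$-equivariant perturbations, so the cycle is robust. The only step requiring any real work has already been carried out inside Lemma~\ref{lem:N3SaddleConnection}, where the Lyapunov-type argument for the function~$V$ used the uniform bound $\norm{R}_\CIR<15$ to convert the coupling ratio $\K/r$ into strict monotonicity of~$V$ along trajectories; everything else in the present theorem is bookkeeping over the symmetry orbit and a feasibility check on the four parameter inequalities.
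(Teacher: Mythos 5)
Your proposal is correct and follows essentially the same route as the paper: the theorem is assembled from Lemmas~\ref{lem:N3FreqSync}--\ref{lem:N3Dissipative} together with the $\Zm$ symmetry, the feasibility of the parameter window $0<\tfrac{1}{18}\K<r<\tfrac{1}{15}\K<\tfrac{3}{20}$ is checked, and robustness follows because the connections are source--sink connections in symmetry-forced invariant subspaces. (One trivial slip: the last implication should read $15r<\K$ together with $\K<\tfrac{9}{4}$, the latter coming from $\tfrac{1}{15}\K<\tfrac{3}{20}$, rather than $\K<15r<\tfrac{9}{4}$; the conclusion $4\K<9$ is unaffected.)
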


\subsection{Nonclosed heteroclinic chains}

\newcommand{\Ccl}{\Cyc^\textrm{cl}}

For networks of $\maxdim=3$ oscillators per population, there are continua of heteroclinic connections between equilibria. In particular, the heteroclinic chain associated with~$\Cyc_3$ contains all heteroclinic trajectories $\HC{\DSS}{\DDS}$, $\HC{\DDS}{\SDS}$ of trivial isotropy. While the resulting associated heteroclinic chain~$\Hf(\Cyc_3)$ is cyclic, it is not closed. In particular, the condition of Remark~\ref{rem:Stability} is not satisfied.

To formalize this observation, we adapt some terminology that was recently introduced in~\cite{Ashwin2018}.

\begin{defn}
Let~$\Cyc=(\xi_1, \dotsc, \xi_Q)$ be a heteroclinic cycle and~$\Hf(\Cyc)$ its associated heteroclinic chain. An equilibrium 
\begin{enumerate}[(i)] 
\item $\xi_q$ is \emph{complete in~$\Hf(\Cyc)$} if $\Wu(\xi_q)\subset\Hf(\Cyc)$,
\item $\xi_q$ is \emph{almost complete in~$\Hf(\Cyc)$} if $\Wu(\xi_q)\sm\Hf(\Cyc)$ is of measure zero (with respect to any Riemannian measure on $\Wu(\xi_q)$),
\item $\xi_q$ is \emph{equable in~$\Hf(\Cyc)$} if~$\dim(C_{pq})$ is equal for all~$p$ with $C_{pq}\neq\emptyset$.
\end{enumerate}
The heteroclinic chain~$\Hf(\Cyc)$ is complete, almost complete, or equable if all its equilibria are complete, almost complete, or equable respectively.
\end{defn}

Note that completeness relates to \emph{clean} heteroclinic networks defined in~\cite{Field2017}. With these notions we obtain the following statement.

\begin{thm}
For parameters satisfying~\eqref{eq:HetCycN3ParCond} and~$r$ sufficiently small, the heteroclinic chain~$\Hf(\Cyc_3)$ is cyclic, equable, and almost complete but not complete.
The closure 
of $\Hf(\Cyc_3)$ is complete, but neither cyclic nor equable.%
\end{thm}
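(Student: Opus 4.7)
My plan is to exploit the detailed picture of the unstable manifolds that emerged in the proof of Lemma~\ref{lem:N3SaddleConnection}. By Lemma~\ref{lem:N3LocalStab} each $\xi_q\in\Cyc_3$ has a two-dimensional unstable manifold contained in one of the two-dimensional invariant subspaces $\DpS$, $\pDS$, or a $\Zm$-image thereof; after quotienting by the within-population $\Sk{3}$ symmetry this subspace becomes $\CIRcl$, with the source $\xi_q$ sitting at the corner $\Sync$ and the sink $\xi_{q+1}$ at the centroid $\Splay$. The Lyapunov-type argument via~$V$ shows that every orbit in the interior $\CIR\sm\sset{\Splay}$ flows to~$\Splay$. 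The first step is to understand $\Wu(\xi_q)\cap\dCIR$: since $\dCIR$ is flow-invariant and the flow on each edge is one-dimensional, a direct computation (for $r=0$ the equilibrium on the edge $\psi_1=0$ lies at $\psi_2=2\arctan(3\K)$ with derivative $-3\K$) together with the implicit function theorem produces, for $r$ sufficiently small, exactly one hyperbolic equilibrium (a ``square'' in Figure~\ref{fig:DpS}(c)) on each edge of~$\dCIR$. Each square is a saddle with one-dimensional unstable direction pointing into $\CIR$ and hence landing at $\Splay=\xi_{q+1}$.

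With this local picture the four properties of $\Hf(\Cyc_3)$ follow by bookkeeping. For each pair $(\xi_p,\xi_q)\in\Cyc_3^2$, the unique two-dimensional invariant subspace containing $\Wu(\xi_p)$ contains precisely two members of $\Cyc_3$, namely $\xi_p$ and its cycle successor, so $C_{pq}\neq\emptyset$ exactly for the cycle edges. This yields cyclicity of $\tilde\Gf(\Cyc_3)$---two orbits $[\DSS]$ and $[\DDS]$ joined by a single edge in each direction---and, because each vertex has a unique incoming edge of dimension two, equability. Almost completeness is immediate: $\Wu(\xi_q)\sm\Hf(\Cyc_3)$ is supported on the one-dimensional intersection $\Wu(\xi_q)\cap\dCIR$, hence has measure zero in the two-dimensional $\Wu(\xi_q)$. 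Non-completeness is witnessed by the two boundary orbits leaving $\xi_q$ along $\dCIR$, whose limit equilibria are squares lying outside of $\Cyc_3$.

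For the closure statement the crux is a continuity argument: the boundary orbit from $\xi_q$ to a square saddle, together with the square's interior unstable manifold terminating at $\xi_{q+1}$, arises as the limit in $\CIRcl$ of a sequence of interior orbits from $\xi_q$ to $\xi_{q+1}$ entering $\CIR$ along increasingly flat approaches to $\dCIR$. By continuous dependence on initial conditions the boundary orbit, the square itself, and its interior unstable manifold all lie in $\overline{\Hf(\Cyc_3)}$. Since the squares' unstable manifolds already terminate at $\xi_{q+1}\in\Cyc_3$, the closure is complete: every $\xi_q$ and every newly-included square saddle has its full unstable manifold in $\overline{\Hf(\Cyc_3)}$. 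However, each $\xi_{q+1}$ now also receives one-dimensional incoming edges from the three square saddles on the corresponding $\dCIR$, in addition to the original two-dimensional edge from $\xi_q$, which simultaneously breaks cyclicity (multiple edges enter each vertex-orbit) and equability (the incoming edges have dimensions one and two). The main technical obstacle is the perturbation argument locating the square equilibria and confirming their saddle type for $r$ sufficiently small; once this is settled, everything else reduces to careful combinatorics on $\tilde\Gf(\Cyc_3)$ and elementary measure-theoretic observations.
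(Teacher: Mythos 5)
Your overall strategy is the paper's own: perturb from $r=0$ to locate the boundary equilibria on $\dCIR$, use them to separate ``almost complete'' from ``complete'', and read off the failure of cyclicity and equability of the closure from the enlarged connection graph. Your computation of the boundary equilibrium at $2\arctan(3\K)$ with linearization $-3\K$ along the edge is correct and matches the paper's boundary analysis, as does your bookkeeping for cyclicity and equability of $\Hf(\Cyc_3)$ itself.

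There is, however, a genuine error: you treat the two types of invariant subspace as if they carried the same phase portrait. On $\DpS$ the source $\DSS$ sits at the corner $\Sync$, the sink $\DDS$ at the centroid $\Splay$, and the boundary saddles $\xi^\DpS$ attract within $\dCIR$ and repel transversely into $\CIR$ --- this is the picture you describe. But on $\pDS$ the roles are reversed: the source $\DDS$ is the \emph{centroid}, the sink $\SDS$ is the corner, the inter-population term enters with the opposite sign of $\K$, and the boundary equilibria $\xi^\pDS$ repel within $\dCIR$ and attract transversely. Consequently two of your key claims fail for half the saddles: (i) non-completeness of $\DDS$ is \emph{not} witnessed by ``boundary orbits leaving $\xi_q$ along $\dCIR$'' --- indeed $\Wu(\DDS)\cap\dCIR=\emptyset$ since $\Splay$ is interior and $\dCIR$ is invariant; the correct witness is the one-dimensional \emph{interior} stable manifolds $\Ws(\xi^\pDS)\subset\Wu(\DDS)$, which flow to the boundary saddles rather than to $\SDS$; and (ii) the exceptional set $\Wu(\DDS)\sm\Hf(\Cyc_3)=\Gamma\Ws(\xi^\pDS)$ is therefore not supported on $\Wu(\xi_q)\cap\dCIR$. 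The conclusions survive --- the exceptional set is still one-dimensional and hence of measure zero, and the unstable manifolds of $\xi^\pDS$ (which lie in $\dCIR$ and terminate at $\Sync=\SDS$) still supply the extra one-dimensional incoming edges that break cyclicity and equability of the closure, reproducing the graph of Figure~\ref{fig:HetChain} --- but your proof as written only establishes them for the $\DpS$-type connections and must be carried out separately, with the reversed saddle types, on $\pDS$.
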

\begin{figure}
\begin{tikzpicture}
\tikzstyle{breakd}= [dash pattern=on 23\pgflinewidth off 2pt]
  \matrix (m) [matrix of math nodes, row sep=1.3em,
    column sep=1.5em, ampersand replacement=\&]{
    \& \G\DSS \& \\
    \G\xi^\pDS \& \& \G\xi^\DpS\\
    \& \G\DDS \& \\};
  \path[-stealth]
    (m-1-2) edge [->, bend left] node [right] {2} (m-3-2)  
    (m-1-2) edge [->] node [above right] {1} (m-2-3)  
    (m-2-3) edge [->] node [below right] {1} (m-3-2)
    (m-3-2) edge [->, bend left] node [left] {2} (m-1-2)  
    (m-3-2) edge [->] node [below left] {1} (m-2-1)    
    (m-2-1) edge [->] node [above left] {1} (m-1-2);    
\end{tikzpicture}
\caption{\label{fig:HetChain}The closure of the heteroclinic chain~$\Hf(\Cyc_3)$ is contained in the noncyclic heteroclinic chain~$\Hf(\Ccl)$ shown here. The labels on the arrows denote the dimension of the set of heteroclinic connections; the stability of $\xi^\pDS,\xi^\DpS$ was evaluated for $r=0.01$, $K=0.16$.
The graph $\tilde\Gf(\Cyc_3)$ corresponds to the edges corresponding to two-dimensional continua of heteroclinic trajectories.}
\end{figure}%
\begin{proof}
Note that the heteroclinic chain~$\Hf(\Cyc_3)$ associated with~$\Cyc_3$ is cyclic---because $\Wu(\DSS)\subset\DpS$ and $\Wu(\DDS)\subset\pDS$---and thus equable.

First, consider the invariant set~$\DpS$; it suffices to consider~$\overline{\CIR}$ as above. The invariant set~$\dCIR$ consists of points of nontrivial isotropy, i.e., $\Sigma(\theta)\neq\sset{\id}$ for $\theta\in\dCIR$, and~$\dCIR$ has zero (Lebesgue) measure in~$\overline{\CIR}$. Since $\Wu(\DSS)\cap\dCIR\neq\emptyset$ and $\Ws(\DDS)\cap\dCIR=\emptyset$, the equilibrium~$\DSS$ cannot be complete. Parametrize a side of~$\dCIR$ by $\chi\in[0, 2\pi]$ (for example $\chi=\psi_1=\psi_2$) where $\chi\in\sset{0,2\pi}$ corresponds to~$\Sync\in\dCIR$; cf.~Figure~\ref{fig:DpS}. The dynamics are given by
\begin{align*}
\dot\chi &= \cos(\chi)-1+3r\sin(6\chi) + 3\K\sin(\chi)\\
&=2\sin\!\Big(\frac{\chi}{2}\Big)\left(-2\sin\!\Big(\frac{\chi}{2}\Big)+rT(\chi)+6\K\cos\!\Big(\frac{\chi}{2}\Big)\right)
\end{align*}
where~$T(\chi)$ is a trigonometric polynomial. Since $\K>0$ and~$r$ sufficiently small by assumption, there are exactly three equilibria~$\xi^\DpS$ on~$\dCIR$ (which lie in the same group orbit) with $\chi\approx 2\arctan(3\K)$. These are of saddle type, attracting within~$\dCIR$ and transversely repelling (within~$\DpS$); these are shown in Figure~\ref{fig:DpS}(c). Therefore, 
\[\Wu(\DSS)\cap \Ws(\DDS) = \CIR\sm\Gamma \Wu(\xi^\DpS)\]
is two-dimensional and of full measure in~$\Wu(\DSS)$. This implies that~$\DSS$ is almost complete.

Second, by evaluating the dynamics on~$\pDS$ one can show by a similar argument that there are three equilibria~$\xi^\pDS$ on~$\dCIR$. These are attracting transversely to~$\dCIR$ and repelling within~$\dCIR$. Consequently,
\[\Wu(\DDS)\cap \Ws(\SDS) = \CIR\sm\Gamma \Ws(\xi^\pDS)\]
and~$\DDS$ is almost complete. However, $\DDS$ is not complete since $\Ws(\xi^\pDS)\subset \Wu(\DDS)$ is not contained in~$\Ws(\SDS)$.
 
Finally, the closure of~$\Hf(\Cyc_3)$ now contains the complete heteroclinic cycle
\[\Ccl=(\DSS, \xi^\DpS, \DDS, \xi^\pDS, \SDS, \xi^\SDp, \SDD, \xi^\SpD, \SSD, \xi^\pSD, \DSD, \xi^\DSp).\]
The heteroclinic chain~$\Hf(\Ccl)$ associated with~$\Ccl$ contains~$\Hf(\Cyc_3)$ but is not cyclic nor equable. Indeed, the graph~$\tilde\Gf(\Ccl)$ for~$\Hf(\Ccl)$ contains the noncyclic subgraph depicted in Figure~\ref{fig:HetChain}. This completes the proof of the assertion.
\end{proof}

\section{Dynamics of Networks with Noise and Broken Symmetry}
\label{sec:Numerics}

\newcommand{\Xsk}{X_{\sigma,k}}

\newcommand{\sSSD}{\underline{\Sp\Sp}\Dp}
\newcommand{\sSDS}{\underline{\Sp}\Dp\underline{\Sp}}
\newcommand{\sDSS}{\Dp\underline{\Sp\Sp}}

\newcommand{\dsym}{\delta_\mathrm{sym}}
\newcommand{\dasym}{\delta_\mathrm{asym}}

The heteroclinic cycles lead to switching between localized frequency synchrony which are observed in numerical simulations. First, define the Kuramoto order parameter of population~$\sigma$ as 
$Z_\sigma = \frac{1}{\maxdim}\sum_{j=1}^\maxdim\exp(i\theta_{\sigma,k})$
and let $R_\sigma=\abs{Z_\sigma}$. In particular, $R_\sigma\in[0,1]$ encodes the level of synchrony in each population, that is, $R_\sigma = 1$ iff $\theta_\sigma\in\Sp$ and $R_\sigma = 0$ if $\theta_\sigma\in\Sp$. 
Write~\eqref{eq:DynMxN} as $\dot\theta_{\sigma,k} = X_{\sigma,k}(\theta)$, setting $\omega=-(\maxdim-1)\gt(0)$ without loss of generality such that oscillators appear stationary if phase synchronized in the absence of interpopulation coupling.
Let~$W_{\sigma,k}$ be independent Wiener processes with mean zero and variance one. Since attracting heteroclinic cycles show exponential slowing down of transition times between subsequent saddles, we solve the stochastic differential equation
\begin{equation}\label{eq:DynMxNSimX}
\dot\theta_{\sigma,k} = \Xsk(\theta) + \eta W_{\sigma,k},
\end{equation}
for $\eta>0$ using \textsc{XPP}~\cite{Ermentrout2002}. As shown in Figure~\ref{fig:SimHetCyle}, the dynamics exhibit switching between localized frequency synchrony: populations sequentially accelerate and decelerate as the populations synchronize and desynchronize. The transition times scale with the noise strength as expected~\cite{Stone1999}.

\begin{figure}
\centerline{
\includegraphics[scale=\imagescaling]{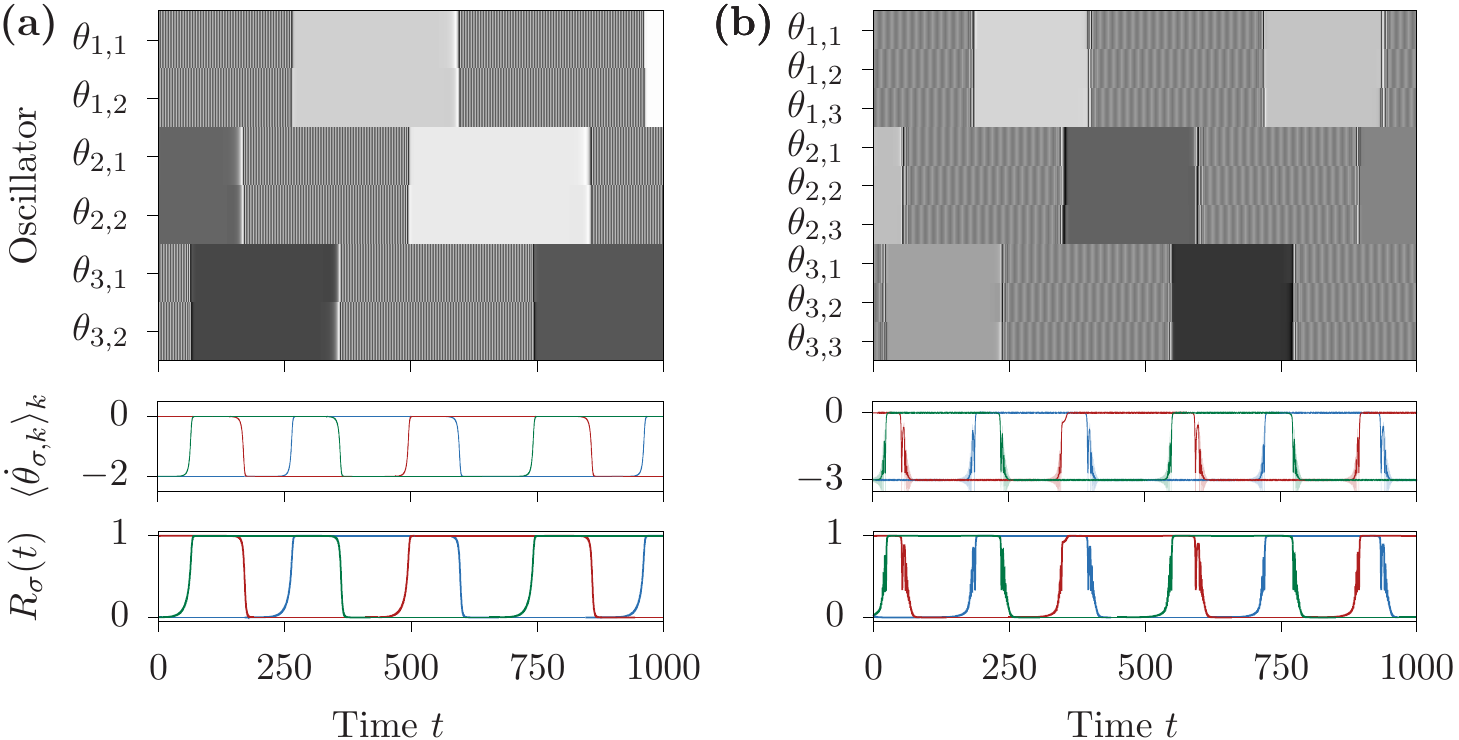}
}
\caption{\label{fig:SimHetCyle}
The heteroclinic cycles~$\Cyc_\maxdim$ induce switching of localized frequency synchrony in the oscillator network~\eqref{eq:DynMxNSimX} of $\maxpop=3$ populations with $\K=0.15$, and noise strength $\eta=10^{-5}$.
Panel~(a) shows the dynamics of $\maxdim=2$ oscillators for $r=0.05$.
Panel~(b) shows the dynamics of $\maxdim=3$ oscillators for $r=0.01$.
The top plot shows the phase evolution of each oscillator (shading indicates the phase where black indicates $\theta_{\sigma,k}=\pi$ and white $\theta_{\sigma,k}=0$).
The middle plot shows the average instantaneous frequencies (lines) as well as the maximum and minimum (shading) per population (colors).
The bottom plot shows the time evolution of the absolute value of the order parameter $R_\sigma$ over time (colors).
Because of the choice of co-rotating frame, the oscillators appear static when they are synchronized; note that different synchronized populations are not necessarily synchronized to each other. A ``kink'' in the order parameter dynamics in Panel~(b)---for example at $t\approx 350$---indicates that the trajectory is passing by an equilibrium on~$\dCIR$.
}
\end{figure}

From the point of view of a phase reduction of a general network of nonlinear oscillators, the interaction terms in the phase oscillator network~\eqref{eq:DynMxN} are nongeneric. Indeed, one would expect that nontrivial pairwise interaction terms would not only be present within populations but also between populations. Here, we asses the effect of forced symmetry breaking on the dynamics~\eqref{eq:DynMxN} in numerical simulations. More specifically, define
\begin{align*}
Y^\textrm{sym}_{\sigma, k}(\theta) &= \frac{1}{\maxpop\maxdim}\sum_{\tau=1}^{\maxpop}\sum_{j=1}^\maxdim\sin(\theta_{\tau,j}-\theta_{\sigma,k}).
\intertext{%
The function $Y^\textrm{sym}_{\sigma, k}$ is~$\Sk{\maxpop\maxdim}$ equivariant and yields pairwise interactions between oscillators between different populations. Let $\iota(\sigma,k) = (\sigma-1)\maxdim+k$ be a linear indexing for all oscillators. Define}
Y^\textrm{asym}_{\sigma, k}(\theta) &= \frac{1}{\maxpop\maxdim}\sin(\theta_{\iota^{-1}(\iota(\sigma,k)+1)} - \theta_{\sigma,k})
\end{align*}
which yields additional pairwise interaction terms. For parameters $\dsym$, $\dasym$ we now consider the evolution of
\begin{equation}\label{eq:DynMxNSim}
\dot\theta_{\sigma,k} = \Xsk(\theta) + \dsym Y^\textrm{sym}_{\sigma, k}(\theta) + \dasym Y^\textrm{asym}_{\sigma, k}(\theta) + \eta W_{\sigma,k}.
\end{equation}

Heteroclinic switching dynamics persist if the phase-shift symmetries are broken. For $\dsym>0$, $\dasym=0$, the $(\Sn\times\Tor)^\maxpop\rtimes \Zm$ symmetry of~\eqref{eq:DynMxN} is broken; if $\G=(\Sn^\maxpop\rtimes\Zm)\times\Tor$ then~\eqref{eq:DynMxNSim} is $\G$-equivariant. In other words, rather than having a phase-shift symmetry for each population, the system~\eqref{eq:DynMxNSim} has a single phase-shift symmetry which acts by adding a constant phase to all $\maxpop\maxdim$ oscillators. While this breaks the invariant subspace structure that gave rise to the robust heteroclinic cycles, we expect certain normally hyperbolic tori to persist for $\dsym>0$ sufficiently small. More precisely, if
\[\sDSS = \tset{(\theta_1, \theta_2, \theta_3)\in\Tormn}{\theta_1\in\Dp, \theta_2,\theta_3\in\Sp, \theta_2=\theta_3}\subset\DSS\] 
(and $\sSDS$, $\sSSD$ are its images under the~$\Zm$ symmetry action) then we expect that invariant tori exist close to~$\sDSS$, $\sSDS$, and $\sSSD$. Indeed, solving the system numerically---as shown in Figure~\ref{fig:SimHetCyle_sb}---indicates that there is in fact a residual attracting heteroclinic network which approaches $\sDSS$, $\sSDS$, and~$\sSSD$.

\begin{figure}
\centerline{
\includegraphics[scale=\imagescaling]{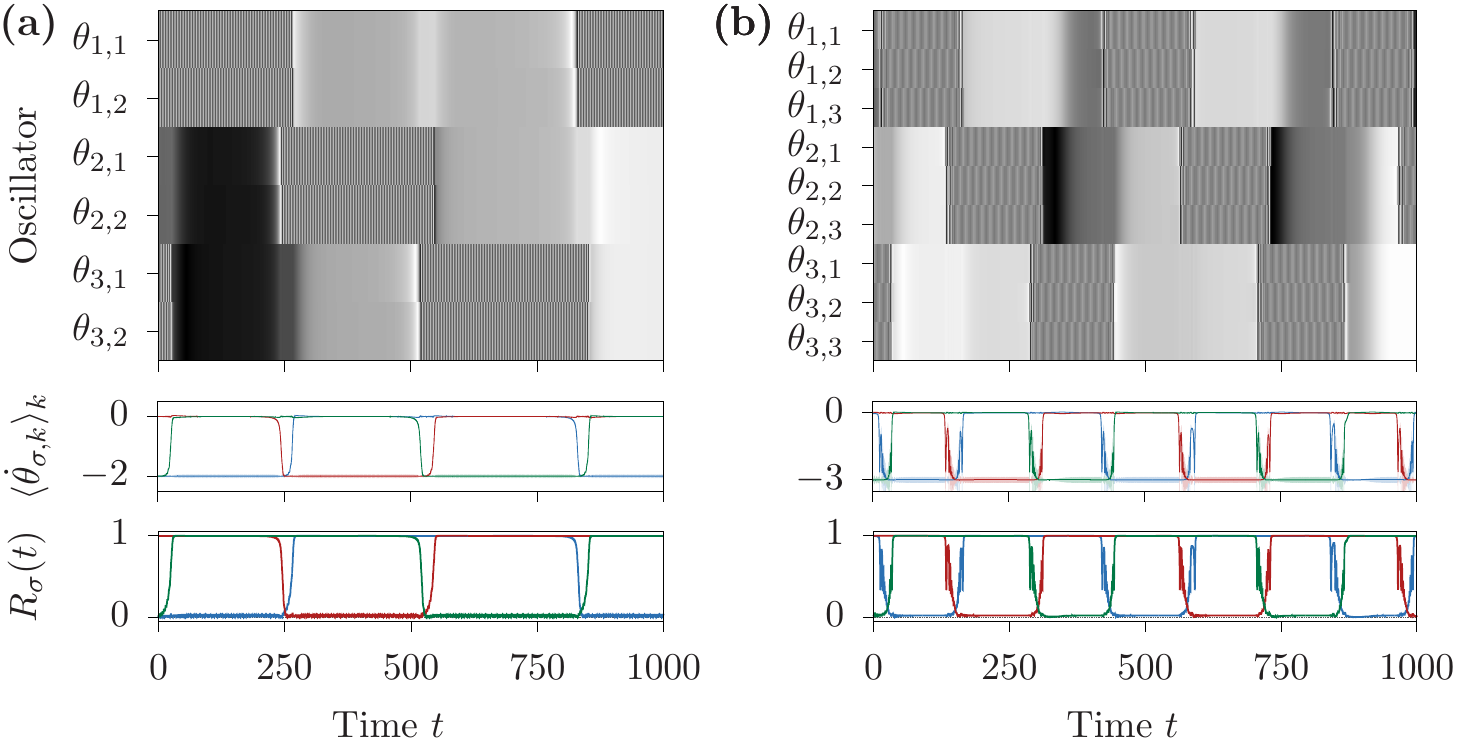}
}
\caption{\label{fig:SimHetCyle_sb}
Switching between localized frequency synchrony persists for the network~\eqref{eq:DynMxNSim} as the phase shift symmetries are broken, $\dsym=0.1$; the other parameters are as in Figure~\ref{fig:SimHetCyle}. Due to the attractive coupling between populations, the synchronized populations now synchronize in phase with each other. In other words, trajectories approach $\sDSS$, $\sSDS$, and $\sSSD$ cyclically.
}
\end{figure}

\begin{figure}
\centerline{
\includegraphics[scale=\imagescaling]{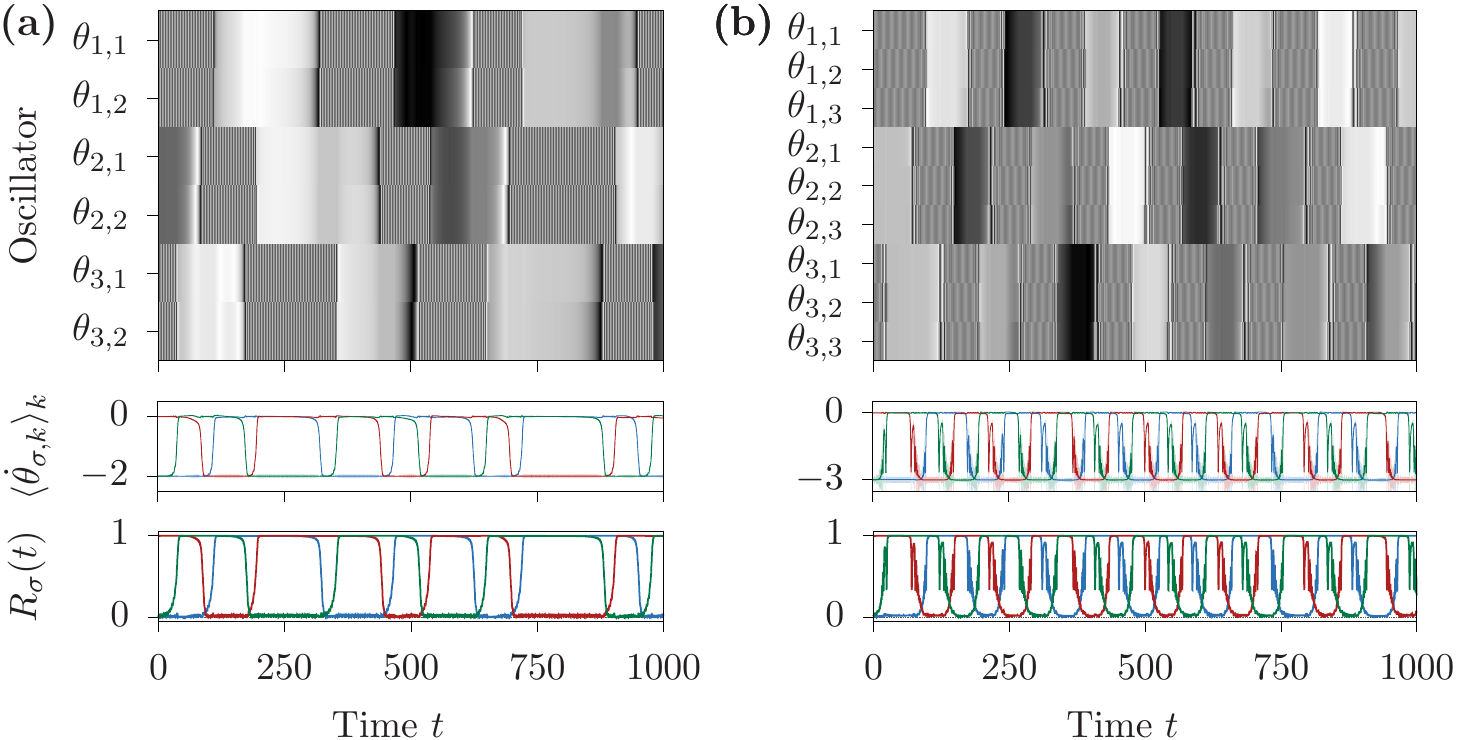}
}
\caption{\label{fig:SimHetCyle_sb_c} Further symmetry breaking leads to deterministic, irregular cycling of localized frequency synchrony for the dynamics of~\eqref{eq:DynMxNSim}. Here $\dsym=0.1$ and $\dasym=0.3\dsym$ while noise is absent, $\eta=0$; all other parameters are as in Figure~\ref{fig:SimHetCyle}.}
\end{figure}

With further forced symmetry breaking, $\dsym, \dasym>0$, the phase oscillator network~\eqref{eq:DynMxNSim} exhibits irregular switching of localized frequency synchrony even in the absence of noise. These potentially chaotic dynamics arise close to the heteroclinic networks~$\Cyc_\maxdim$, $\maxdim=2,3$, as shown in Figure~\ref{fig:SimHetCyle_sb_c}.

\section{Discussion and Conclusions}

Phase oscillator networks with higher-order interactions can give rise to heteroclinic cycles between frequency synchrony; in numerical simulations, these lead to sequential acceleration and deceleration of oscillator populations. Indeed, because of dissipativity, we expect that the attractor of the deterministic system is a subset of the closure of the associated heteroclinic chain. For networks of $\maxdim=2$ oscillators in each population we calculate the stability of the heteroclinic cycles and their bifurcations explicitly in the companion paper~\cite{Bick2018}. For $\maxdim=3$ the unstable manifold of each saddle is (at least) two-dimensional and the assumptions to apply existing stability results~\cite{Krupa1995, Ashwin1998a} are not satisfied. We will address this question in future research.

Rather than assuming weak coupling between populations, the results presented here rely on the symmetries induced by the nonpairwise higher-order network interaction terms. Our numerical simulations for nearby vector fields where these symmetries were broken indicated the persistence of some residual heteroclinic structure. In this context, it would be desirable to extend the methods of forced symmetry breaking~\cite{Sandstede1995,Chossat1995,Guyard1999} to understand the bifurcation behavior for nearby network vector fields with generic interactions.

Numerical simulations indicate that switching dynamics between localized frequency synchrony also arises in networks with $\maxpop=3$ populations of $\maxdim>3$ phase oscillators~\cite{Bick2017c}. Indeed, the methods used here are likely applicable to such networks as well: without higher harmonics, $r=0$, the oscillators are sinusoidally coupled and the phase space~$\Tormn$ is foliated by low-dimensional manifolds~\cite{Pikovsky2011, Chen2017} on which we expect to have a similar potential functions as in the proof of Lemma~\ref{lem:N3SaddleConnection}. While we cannot expect hyperbolicity in this limit due to the degeneracy in the system, suitable network interaction terms with higher harmonics, combined with an approach similar to~\cite{Vlasov2016}, could give rigorous results to show the existence of heteroclinic networks.

In summary, heteroclinic switching dynamics between localized frequency synchrony may arise in networks of identical phase oscillators with higher-order interactions. Here we gave rigorous results for small oscillator networks, but we anticipate similar approaches to be viable for larger networks. While the heteroclinic switching observed here are distinct from those discussed in~\cite{Komarov2011}, where large networks ($\maxdim\geq 1000$) of nonidentical oscillators are considered, it would be interesting to relate the two.

\section*{Acknowledgements}

\noindent The author is indebted to P~Ashwin, JSW~Lamb, A~Lohse, and M~Rabinovich for helpful conversations.


\bibliographystyle{unsrt}
\def\urlprefix{}
\def\url#1{}

\bibliography{/home/shared/Papers/library} 

\end{document}